 \newtheorem{theorem}{Theorem}
 \newtheorem{corollary} {Corollary}
 \newtheorem{lemma} {Lemma}
 \newtheorem{proposition}{Proposition}
 \newtheorem{definition}{Definition}
 \newtheorem{remark}{Remark}
 \newtheorem{example}{Example}
\def\calf{{\mathcal F}}
\def\vol{\operatorname{vol}}
\newcommand\id{\operatorname{id}}
\def\RR{\mathbb{R}}
\def\<{\langle}
\def\>{\rangle}
\newcommand\tr{\operatorname{tr}}
\newcommand\Div{\operatorname{div}}
\def\Ric{\operatorname{Ric}}
\newcommand{\eps}{\varepsilon}
\newcommand{\eq}{\hspace*{-.4mm}&=&\hspace*{-.4mm}}
\author{Vladimir Rovenski\footnote{Department of Mathematics, University of Haifa, Mount Carmel, 3498838 Haifa, Israel
       \newline e-mail: {\tt vrovenski@univ.haifa.ac.il} } }
\title{A series of integral formulas for a foliated sub-Riemannian manifold}
\begin{document}

\date{}

\maketitle

\begin{abstract}

In this article, we prove a series of integral formulae for  a codimension-one foliated sub-Riemannian manifold, which is a new geometric object
denoting a Riemannian manifold $(M,g)$ equipped with a distribution ${\mathcal D}=T\calf\oplus\,{\rm span}(N)$, where $\calf$ is a foliation of $M$ and $N$ a unit vector field $g$-orthogonal to $\calf$.
Our integral formulas involve $r$th mean curvatures of $\calf$, Newton transformations of the shape operator of $\calf$ with respect to $N$ and the curvature tensor of induced connection on ${\mathcal D}$ and generalize some known integral formulas (due to Brito-Langevin-Rosenberg, Andrzejewski-Walczak and the author)
for codimension-one foliations.
We apply our formulas to sub-Riemannian manifolds with restrictions on the curvature and extrinsic geometry of a foliation.

\vskip1.5mm\noindent
\textbf{Keywords}:
Distribution, sub-Riemannian manifold, codimension-one foliation, Newton transformation, shape operator

\vskip1.5mm\noindent
\textbf{Mathematics Subject Classifications (2010)} 53C12, 53C17

\end{abstract}

\maketitle

\section*{Introduction}

Integral formulas are useful for solving many problems in Riemannian geometry, both manifolds (for example, the Gauss-Bonnet formula for closed surfaces) and foliations, see surveys \cite{arw,RWa-1}:

\smallskip\noindent\
$\bullet$~characterizing of foliations, whose leaves have a given geometric property;

\noindent\
$\bullet$~prescribing the higher mean curvatures of the leaves of a foliation;

\noindent\
$\bullet$~minimizing functionals like volume defined for tensor fields on a foliation.

\smallskip

The~first known integral formula for a codimension-one foliation of a closed Riemannian manifold belongs to G.\,Reeb, see \cite{reeb1},
\begin{equation}\label{E-sigma1}
 \int_M H\,d\vol_g=0;
\end{equation}
thus,
either the mean curvature of the leaves $H\equiv0$ or $H(x)\,H(x')<0$ for some points $x\ne x'$ on $M$.
Recall that any compact manifold with Euler number equal to zero admits codimension-one foliations.
 There is a series of integral formulas for codimension-one foliations,
starting with \eqref{E-sigma1}, with consequences for foliated space forms, see surveys in \cite{arw,blr,RWa-1}.
These integral formulas were obtained by applying the Divergence Theorem to suitable vector fields.
The~second formula in the series of total $r$th mean curvatures $\sigma_r$'s
(symmetric functions of the principal curvatures of the leaves) of a codimension-one foliation with a unit normal vector field $N$~is,
see~\cite{nora},
\begin{equation}\label{E-sigma2}
 \int_M (2\,\sigma_2-\Ric_{N,N})\,{\rm d}\vol_g=0.
\end{equation}
By \eqref{E-sigma2}, there are no totally umbilical codimension-one foliations of a closed manifold of negative Ricci curvature,
and there are no harmonic codimension-one foliations of a closed manifold of positive Ricci curvature.

In \cite{aw2010}, the Newton transformations $T_r(A)$ (of the \textit{Weingarten operator} $A$ of the leaves) were applied to codimension one foliations,
and a series of integral formulas for $r\ge0$ starting with \eqref{E-sigma2} was obtained (with consequences for foliated space forms,
see~\cite{blr}):
\begin{equation}\label{E-intNTp1}
 \int_{M}\big((r+2)\sigma_{r+2}-\tr_{\,\calf}(T_r(A)\,{\mathcal R}_{N})
 -\sum\nolimits_{1\le j\le r}(-1)^{j-1}\tr_{\,\calf}(T_{r-j}(A)\,{\mathcal R}_{A^{j-1}\nabla_{N} N}\big)\,{\rm d}\vol_g=0.
\end{equation}
Here, the linear operator ${\mathcal R}_{X}:T\calf\to T\calf$ for $X\in TM$ is given by
 ${\mathcal R}_{\,X}:Y\to R(Y, X)N$.

 The natural question
 arises:
{can one find similar integral formulas for foliations of arbitrary codimension}?
In studying this question, we consider a foliation $\calf$ of $(M,g)$ equipped with a unit normal vector field $N$. In this case, $T\calf$
(the tangent bundle of $\calf$) and $N$ span in the general case a non-integrable distribution ${\mathcal D}$ on $(M,g)$ (subbundle of $TM$).
Note that $(M,{\mathcal D})$ with the metric on ${\mathcal D}$ (e.g., the restriction of $g$) is the main object of sub-Riemannian geometry, see~\cite{BF}.
Thus, we study the following sub-Riemannian version of the above question:
\textit{can one extend integral formulas \eqref{E-intNTp1} for codimension-one foliated sub-Riemannian manifolds}?

Apparently, a foliated sub-Riemannian manifold $(M,{\mathcal D},\calf,g)$, i.e., the tangent bundle $T\calf$ of a foliation $\calf$ is a subbundle of ${\mathcal D}$, is a new geometrical object.
In~the article, we consider a codimension-one foliated sub-Riemannian manifold $(M,{\mathcal D},\calf,g)$,
i.e., ${\mathcal D}=T\calf\oplus\,{\rm span}(N)$ and $N$ is a unit vector field $g$-orthogonal to $\calf$,
 and prove a series of integral formulas (in Theorems~\ref{T-mainLW-1}--\ref{T-main01p1} and Corollaries~\ref{C-P1},~\ref{C-P2}),
which generalize known integral formulas, see \cite{aw2010,blr,reeb1,RWa-1}, for codimension-one foliated Riemannian manifolds.
The~integral formulas involve $r$th mean curvatures of $\calf$ (i.e., symmetric functions of the principal curvatures of the leaves),
Newton transformations of the shape operator of $\calf$ with respect to $N$ and the curvature tensor of induced connection on ${\mathcal D}$.
We apply our formulas to sub-Riemannian manifolds
with restrictions on the curvature and extrinsic geometry of a foliation.


\section{Preliminaries}

Here, we use the induced linear connection to define the shape operator (with its Newton transformations) and the curvature tensor related to a codimension-one foliated sub-Riemannian manifold, then we prove three auxiliary lemmas.

Let ${\mathcal D}$ be an $(n+1)$-dimensional distribution on a smooth $m$-dimensional manifold $M$, i.e.,
a subbundle of $TM$ of rank $n+1$ (where $0<n<m$).
In other words, to each point $x\in M$ we assign an $(n+1)$-dimensional subspace ${\mathcal D}_x$ of the tangent space $T_xM$ smoothly depending on $x$.
An integrable distribution determines a foliation; in this case the Lie bracket of any two vector fields from ${\mathcal D}$ also belongs to ${\mathcal D}$.
A pair $(M,{\mathcal D})$, where $M$ is a manifold and ${\mathcal D}$ is a non-integrable distribution on $M$, is called a \textit{non-holonomic manifold}, see~\cite{BF}.
The notion of non-holonomic manifold was introduced for a geometric interpretation of constrained systems in classical mechanics.
A~\textit{sub-Riemannian manifold} is a non-holonomic manifold $(M,{\mathcal D})$, equipped with a sub-Riemannian metric $g=\<\cdot\,,\cdot\>$, i.e.,
the scalar product $g:{\mathcal D}_x\times{\mathcal D}_x\to\RR$ for all $x\in M$, see \cite{BF}.
Usually, they assume that the sub-Riemannian metric on ${\mathcal D}$ (the horizontal bundle)
is extended to a Riemannian metric on the whole space $M$, also denoted by $g$.

The orthoprojector $P:TM\to{\mathcal D}$  onto the distribution ${\mathcal D}$ is characterized by the properties
\begin{equation*}
 P=P^*\ (\textrm{self-adjoint}),\quad P^2=P,
\end{equation*}
e.g., \cite{g1967},
and similarly for the (1,1)-tensor $P^\bot=\id_{\,TM}-{P}$ -- the orthoprojector onto the orthogo\-nal distribution ${\mathcal D}^\bot$ (the vertical subbundle).
A sub-Riemannian manifold $(M,{\mathcal D},g)$ equipped with a foliation $\calf$ such that the tangent bundle $T\calf$ is a subbundle of ${\mathcal D}$ will be called a \textit{foliated sub-Riemannian manifold}.

In this article, we assume that $\dim\calf=n$ (that is $\calf$ is a codimension-one foliation relative to ${\mathcal D}$),
and there exists a unit vector field $N$ orthogonal to $\calf$ and tangent to ${\mathcal D}$. This means that the Euler characteristic of ${\mathcal D}$ is zero.
The following orthogonal decomposition is valid:
\[
 {\mathcal D}=T\calf\oplus\,{\rm span}(N).
\]
The Levi-Civita connection $\nabla$ on $(M,g)$ induces a linear connection $\nabla^P$ on ${\mathcal D}=P(TM)$:
\[
 \nabla^P_X PY = P\nabla_X PY,\quad X,Y\in\Gamma(TM),
\]
which is compatible with the metric: $X\<U,V\> = \<\nabla^P_X U,\, V\> +\<U,\, \nabla^P_X V\>$.

Define the horizontal vector field
\[
 Z=\nabla^P_N\,N,
\]
and note that $\nabla_N\,N$ is the curvature vector of $N$-curves in $(M,g)$.

The \textit{shape operator} $A_N: T\calf\to T\calf$ of the foliation $\calf$ with respect to $N$ is defined by
\begin{equation}\label{E-A-D}
 A_N(X)=-\nabla^P_X\,N,\quad X\in T\calf.
\end{equation}
The \textit{elementary symmetric functions} $\sigma_j(A_N)$ of $A_N$
($r$th mean curvatures of $\calf$ in ${\mathcal D}$)
are given~by the equality
\[
 \sum\nolimits_{\,r=0}^{\,n}\sigma_r(A_N)\,t^r=\det(\,\id_{\,T\calf}+\,t\,A_N),\quad t\in\RR.
\]
Note that $\sigma_r(A_N)=\sum\nolimits_{\,i_1<\cdots<i_r} \lambda_{i_1}\cdots \lambda_{i_r}$,
where $\lambda_1\le\ldots\le\lambda_n$ are the eigenvalues of $A_N$.
Let $\tau_j(A_N)=\tr\,(A_N)^j$ for $j\in\mathbb{N}$ be the \textit{power sums} symmetric functions of $A_N$.
For~example, $\sigma_0(A_N)=1$, $\sigma_1(A_N)=\tau_1(A_N)=\tr A_N$, $\sigma_n(A_N)=\det A_N$, and
\begin{equation}\label{E-sigma2-tau1-2}
 2\,\sigma_2(A_N)=\tau_1^2(A_N)-\tau_2(A_N).
\end{equation}
For short, we set
\[
 A=A_N,\quad
 \sigma_{r}=\sigma_{r}(A_N),\quad
 \tau_{r}=\tau_{r}(A_N).
\]
Next, we introduce the curvature tensor $R^P:TM\times TM\rightarrow{\rm End}({\mathcal D})$ of the connection $\nabla^P$:
\begin{equation}\label{E-R-k}
 {R}^P(X,Y) = \nabla^P_{X}\nabla^P_{Y} -\nabla^P_{Y}\nabla^P_{X} -\nabla^P_{[X,\, Y]};
\end{equation}
Set ${R}^P(X,Y,V,U)=\<{R}^P(X,Y)V,\, U\>$ for $U,V\in{\mathcal D}$.
Obviously, ${R}^P(Y,X)V=-{R}^P(X,Y)V$ (for any linear connection). Since $\nabla^P$ is compatible with the metric, then
\begin{equation}\label{E-R-symm2}
 {R}^P(X,Y,V,U)=-{R}^P(X,Y,U,V).
\end{equation}
Recall the Codazzi's equation for a codimension-one foliation (or a submanifold) of $(M,g)$:
\begin{equation}\label{E-codazzi-classic}
 (\nabla_{X}\, h)(Y,U) -(\nabla_{Y}\, h)(X,U) = (R(X,Y)Z)^\bot,
\end{equation}
where
$R:TM\times TM\rightarrow{\rm End}(TM)$ is the Riemann curvature tensor of the Levi-Civita connection,
\[
 {R}(X,Y) = \nabla_{X}\nabla_{Y} -\nabla_{Y}\nabla_{X} -\nabla_{[X,\, Y]},
\]
$^\bot$ denotes the projection onto the vector bundle orthogonal to $\calf$,
and $h:T\calf\times T\calf\to(T\calf)^\bot$ is the second fundamental form of $\calf$ in $(M,g)$ defined by
\[
 h(X,Y)=(\nabla_XY)^\bot.
\]

\begin{lemma}
For ${\mathcal D}=T\calf\oplus\,{\rm span}(N)$ on $(M,g)$, the following Codazzi type equation is valid:
\begin{equation}\label{E-codazziAN}
 (\nabla^\calf_{X}\, A) Y -(\nabla^\calf_{Y}\, A) X = -R^P(X,Y)N ,\quad X,Y\in T\calf.
\end{equation}
\end{lemma}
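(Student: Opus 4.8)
The plan is to unravel both sides in terms of the induced connection $\nabla^P$ and to exploit integrability of $\calf$, so that $T\calf$ is closed under the Lie bracket. First I would record the tangential–normal splitting of $\nabla^P$ along the leaves: for $X,Y\in T\calf$, since $\<Y,N\>=0$ and $\nabla^P$ is metric, differentiating gives $\<\nabla^P_X Y,N\>=-\<Y,\nabla^P_X N\>=\<AX,Y\>$, whence
\begin{equation*}
 \nabla^P_X Y=\nabla^\calf_X Y+\<AX,Y\>\,N ,
\end{equation*}
where $\nabla^\calf_X Y$ denotes the $T\calf$-component of $\nabla^P_X Y$ (the induced leaf connection).

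Next I would compute $R^P(X,Y)N$ straight from the definition \eqref{E-R-k}, using $\nabla^P_X N=-AX$, to obtain
\begin{equation*}
 R^P(X,Y)N=-\nabla^P_X(AY)+\nabla^P_Y(AX)+A[X,Y].
\end{equation*}
Since $AY,AX\in T\calf$, I would apply the splitting to the first two terms; their components along $N$ are $\<AX,AY\>N$ and $\<AY,AX\>N$, which cancel by symmetry of $g$, so that only tangential contributions survive.

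Then, using integrability of $\calf$ (so $[X,Y]\in T\calf$), comparison of the $N$-components in $\nabla^P_X Y-\nabla^P_Y X=P[X,Y]=[X,Y]$ gives $\<AX,Y\>=\<AY,X\>$, i.e.\ $A$ is self-adjoint, while the tangential part yields $\nabla^\calf_X Y-\nabla^\calf_Y X=[X,Y]$, i.e.\ $\nabla^\calf$ is torsion-free. Substituting $A[X,Y]=A\nabla^\calf_X Y-A\nabla^\calf_Y X$ and regrouping then gives
\begin{equation*}
 R^P(X,Y)N=-\big(\nabla^\calf_X(AY)-A\nabla^\calf_X Y\big)+\big(\nabla^\calf_Y(AX)-A\nabla^\calf_Y X\big)=-(\nabla^\calf_X A)Y+(\nabla^\calf_Y A)X ,
\end{equation*}
which is \eqref{E-codazziAN} after transposing the sign.

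The argument is essentially algebraic once the splitting is in hand; the one point deserving care is the cancellation of the $N$-directional terms produced by differentiating $AY$ and $AX$. This is precisely where symmetry of the metric — and hence self-adjointness of $A$, which is forced by integrability of $T\calf$ — is used, so I would establish those two facts before assembling the final identity.
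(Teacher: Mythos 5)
Your proof is correct, and it takes a genuinely different route from the paper's. The paper starts from the classical Codazzi equation \eqref{E-codazzi-classic} for $\calf$ viewed as a foliation of $(M,g)$ with full normal bundle $(T\calf)^\bot$ of rank $m-n$, then relates the ambient curvature $R$ to $R^P$ via the identity \eqref{E-codazzi-RN} and shows that the resulting correction terms cancel against the second-fundamental-form terms $\<h(X,U),\nabla_Y N\>-\<h(Y,U),\nabla_X N\>$; this has the virtue of making explicit how \eqref{E-codazziAN} degenerates to the classical statement when ${\mathcal D}=TM$. You instead work entirely inside the bundle $({\mathcal D},\nabla^P)$, treating each leaf as a ``hypersurface'' of ${\mathcal D}$ with unit normal $N$, and run the standard Gauss--Codazzi computation for the metric connection $\nabla^P$: the splitting $\nabla^P_XY=\nabla^\calf_XY+\<AX,Y\>N$, the cancellation of the $N$-components $\mp\<AX,AY\>N$ by symmetry of $g$, and the use of integrability to get $[X,Y]=\nabla^\calf_XY-\nabla^\calf_YX$. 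Your argument is shorter, avoids any reference to the ambient Riemann tensor $R$ or to $h$, and yields the self-adjointness of $A$ and the torsion-freeness of $\nabla^\calf$ as byproducts. One small point worth noting (which you implicitly use and which is consistent with the paper): the leaf connection $\nabla^\calf$ obtained as the $T\calf$-component of $\nabla^P$ coincides with the one obtained as the $T\calf$-component of the Levi-Civita connection $\nabla$, since $P$ followed by projection onto $T\calf\subset{\mathcal D}$ equals direct projection onto $T\calf$; so there is no ambiguity in the statement being proved.
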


\begin{proof}
From \eqref{E-codazzi-classic}, for all vectors $X,Y,U\in T\calf$ we get
\begin{equation}\label{E-codazzi-1}
 \<(\nabla^\calf_{X}\, A) Y -(\nabla^\calf_{Y}\, A) X,\ U\> + \<h(X,U), \nabla_Y N\> - \<h(Y,U), \nabla_X N\>
 = -\<R(X,Y)N, U\> .
\end{equation}
 Applying the orthoprojector
on the vector bundle orthogonal to $\calf$,
we find
\begin{equation}\label{E-codazzi-RN}
 \<R(X,Y)N, U\>=\<R^P(X,Y)N, U\> +
 \<\nabla_{X}((\nabla_{Y}\,N)^\bot) -\nabla_{Y}((\nabla_{X}\,N)^\bot) -\nabla_{[X,\ Y]^\bot}\,N, U\>.
\end{equation}
Using the equalities $[X,\ Y]^\bot=0$ (since $T\calf$ is integrable), \eqref{E-codazzi-RN} and
\begin{eqnarray*}
 \<h(X,U),\, \nabla_Y\,N\> = \<\nabla_{Y}N,\, (\nabla_{X}\,U)^\bot\> = -\<\nabla_{X}((\nabla_{Y}\,N)^\bot),\, U\>,\\
 \<h(Y,U),\, \nabla_X\,N\> = \<\nabla_{X}N,\, (\nabla_{Y}\,U)^\bot\> = -\<\nabla_{Y}((\nabla_{X}\,N)^\bot),\, U\>,
\end{eqnarray*}
in \eqref{E-codazzi-1} completes the proof.
\end{proof}

\smallskip

The following lemma generalizes \cite[Lemma~3.1]{lw2}.

\begin{lemma}\label{L-31-myC}
Let $\{e_i\}$ be a local orthonormal frame of $\,T\calf$ such that  at a point $x\in M$:

$\bullet$ $\nabla^\calf_X\,e_i=0\ (1\le i\le n)$ for any vector $X\in T_xM$;

$\bullet$ $\nabla^P_\xi\,e_i=0\ (1\le i\le n)$ for any vector $\xi\in {\mathcal D}^\bot_xM$.

\noindent
Then the following equality is valid at $x\in M$:
\begin{equation}\label{E-EiNNC}
 \<\nabla_{e_i} Z, e_j\> = \<A^2 e_{i}, e_{j}\> + \<{R}^P(e_i,N)N, e_j\> - \<(\nabla^{\calf}_N \,A)e_i,e_j\> +\<Z, e_i\>\<Z, e_j\>.
\end{equation}
\end{lemma}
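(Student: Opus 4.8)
The plan is to rewrite the left-hand side entirely in terms of the induced connection $\nabla^P$ and then expand it through the definition \eqref{E-R-k} of $R^P$. Since $Z=\nabla^P_N N$ and $e_j$ both lie in ${\mathcal D}$ and $P$ is self-adjoint with $Pe_j=e_j$, one has $\langle\nabla_{e_i}Z,e_j\rangle=\langle P\nabla_{e_i}Z,e_j\rangle=\langle\nabla^P_{e_i}Z,e_j\rangle$, so it suffices to compute $\langle\nabla^P_{e_i}\nabla^P_N N,e_j\rangle$. Applying \eqref{E-R-k} to the pair $(e_i,N)$ gives
\begin{equation*}
 \nabla^P_{e_i}\nabla^P_N N=R^P(e_i,N)N+\nabla^P_N\nabla^P_{e_i}N+\nabla^P_{[e_i,N]}N ,
\end{equation*}
and the first term on the right already supplies $\langle R^P(e_i,N)N,e_j\rangle$. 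It remains to identify the two other terms, where $\nabla^\calf$ denotes the induced connection on $T\calf$, agreeing for $Y\in T\calf$ with the $T\calf$-projection of $\nabla^P_{\cdot}Y$.

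For the term $\nabla^P_N\nabla^P_{e_i}N=-\nabla^P_N(Ae_i)$, I would first record, using metric compatibility of $\nabla^P$ together with $\langle e_k,N\rangle\equiv0$ and the normalization $\nabla^\calf_N e_k=0$, that at $x$
\begin{equation*}
 \nabla^P_N e_k=\langle\nabla^P_N e_k,N\rangle\,N=-\langle Z,e_k\rangle\,N .
\end{equation*}
Writing $Ae_i=\sum_k\langle Ae_i,e_k\rangle e_k$ and differentiating, the $N$-directed part drops out upon pairing with $e_j$, and since $\nabla^\calf_N e_i=0$ the surviving $T\calf$-component is precisely $(\nabla^\calf_N A)e_i$; hence $\langle\nabla^P_N\nabla^P_{e_i}N,e_j\rangle=-\langle(\nabla^\calf_N A)e_i,e_j\rangle$, the third term of the claim.

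The core of the argument is the bracket term $\nabla^P_{[e_i,N]}N$. Using torsion-freeness of the Levi-Civita connection I would compute $[e_i,N]=\nabla_{e_i}N-\nabla_N e_i$ and split it along the orthogonal decomposition $TM=T\calf\oplus{\rm span}(N)\oplus{\mathcal D}^\bot$. From $\nabla^P_{e_i}N=-Ae_i$ and the formula for $\nabla^P_N e_i$ above, the $T\calf$-part of $[e_i,N]$ equals $-Ae_i$ and its $N$-part equals $\langle Z,e_i\rangle N$, while a ${\mathcal D}^\bot$-remainder $\xi$ also appears. Then $\nabla^P_{-Ae_i}N=A^2e_i$ and $\nabla^P_{\langle Z,e_i\rangle N}N=\langle Z,e_i\rangle Z$ produce the terms $\langle A^2e_i,e_j\rangle$ and $\langle Z,e_i\rangle\langle Z,e_j\rangle$.

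The main obstacle, and the place where the second frame condition is indispensable, is to show that the vertical remainder contributes nothing, i.e. $\langle\nabla^P_\xi N,e_j\rangle=0$ at $x$ for $\xi\in{\mathcal D}^\bot$. This follows by differentiating $\langle N,e_j\rangle\equiv0$ in the direction $\xi$ and invoking $\nabla^P_\xi e_j=0$: metric compatibility gives $0=\langle\nabla^P_\xi N,e_j\rangle+\langle N,\nabla^P_\xi e_j\rangle=\langle\nabla^P_\xi N,e_j\rangle$. Collecting the four surviving contributions yields \eqref{E-EiNNC}. The only real care needed throughout is to retain the $N$-component of $\nabla^P_N e_k$ (nonzero, and responsible via the bracket for the quadratic term $\langle Z,e_i\rangle\langle Z,e_j\rangle$) while discarding the genuinely vertical pieces.
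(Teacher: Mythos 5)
Your proof is correct. Every step checks out: the reduction $\<\nabla_{e_i}Z,e_j\>=\<\nabla^P_{e_i}\nabla^P_NN,e_j\>$, the identification $\<\nabla^P_N\nabla^P_{e_i}N,e_j\>=-\<(\nabla^{\calf}_N A)e_i,e_j\>$ at $x$ (using $\nabla^\calf_Ne_k=0$ and the fact that the $N$-component of $\nabla^P_Ne_k$ is killed when paired with $e_j$), the decomposition $[e_i,N]=-Ae_i+\<Z,e_i\>N+\xi$ with $\xi\in{\mathcal D}^\bot$, and the vanishing $\<\nabla^P_\xi N,e_j\>=0$ forced by the second frame condition. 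Your route is organized differently from the paper's: the paper starts by differentiating the scalar identity $\<Z,e_j\>=-\<N,\nabla_Ne_j\>$ in the direction $e_i$, then uses the skew-symmetry \eqref{E-R-symm2} to rewrite $\<R^P(e_i,N)N,e_j\>$ as $-\<R^P(e_i,N)e_j,N\>$ and expands the curvature operator acting on $e_j$, assembling the result through a chain of scalar manipulations. You instead expand the curvature acting on $N$ itself, i.e.\ you decompose $\nabla^P_{e_i}\nabla^P_NN$ vectorially and attribute each of the four terms of \eqref{E-EiNNC} to a specific piece ($R^P(e_i,N)N$, the second covariant derivative of $N$, and the horizontal and $N$-directed parts of the bracket). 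The two computations are equivalent via \eqref{E-R-symm2}, but yours makes more transparent where the quadratic term $\<Z,e_i\>\<Z,e_j\>$ and the term $\<A^2e_i,e_j\>$ come from (both from $\nabla^P_{[e_i,N]}N$), and isolates cleanly the single place where the second frame condition is needed, namely to discard the ${\mathcal D}^\bot$-part of the bracket.
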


\proof Taking covariant derivative of $\<Z,\, e_j\>=-\<N,\, \nabla_{N}\,e_j\>$ with respect to $e_i$, we find
\begin{equation}\label{E-aw-08C}
 -\<Z, \nabla_{e_i}e_j\>= \<\nabla_{e_i} Z, e_j\> +\<\nabla_{e_i}N, P\nabla_{N}\,e_j\> +\<N, \nabla_{e_i}P\nabla_{N}\,e_j\>.
\end{equation}
For a foliation $\calf$, we obtain
\[
 \<(\nabla^{\calf}_N A)e_i,e_j\> =\nabla_{N}\<N, \nabla_{e_i}\,e_j\>=\<Z, \nabla_{e_i}e_j\> +\<N,\nabla_{N}P\nabla_{e_i}\,e_j\>.
\]
Therefore, using \eqref{E-R-k}, we calculate at the point $x\in M$:
\begin{eqnarray}\label{E-aw-09C}
\nonumber
 && \<A^2 e_{i}, e_{j}\> +\<{R}^P(e_i,N)\,N, e_j\> -\<(\nabla^{\calf}_N A)e_i,e_j\> \\
\nonumber
 && =\<A^2 e_{i}, e_{j}\>-\<{R}^P(e_i,N)\,e_j, N\>+N\<\nabla_{e_i} N, \,e_j\>\\
 && =\<A^2 e_{i}, e_{j}\>-\<Z, \nabla_{e_i}e_j\>-\<\nabla_{e_i}P\nabla_{N}\,e_j, N\> +\<\nabla_{[e_i,N]}\,e_j, N\>.
\end{eqnarray}
Since $\<\nabla_{P^\bot[e_i,N]}\,e_j, N\>=0$, see conditions at $x\in M$, we rewrite the last term in \eqref{E-aw-09C} as
\[
 \<\nabla_{P[e_i,N]}\,e_j, N\> =\<\nabla_{[e_i,N]}\,e_j, N\>.
\]
Then, using (\ref{E-aw-08C}) and the following equalities at $x\in M$:
\begin{eqnarray*}
 && P\nabla_{e_i}N=\sum\nolimits_{\,1\le j\le n}\<\nabla_{e_i}N, e_j\>e_j,\quad
 P\nabla_{N}\,e_i=\<\nabla_{N}\,e_i, N\>N,\\
 && \<A^2 e_{i}, e_{j}\>=\<\nabla_{e_i}N, N\>\<\nabla_{N}\,e_j, N\>,
\end{eqnarray*}
we simplify the last line in (\ref{E-aw-09C}) as
\begin{equation*}
 \<\nabla_{e_i}Z, e_j\>-\<Z, e_i\>\<Z, e_j\>.
\end{equation*}
From the above, the claim follows.
\hfill$\square$

\smallskip

Many authors investigated $r$th mean curvatures of foliations and hypersurfaces of Riemannian manifolds using the Newton transformations of the shape operator, see \cite{aw2010}.

\begin{definition}\rm
The \textit{Newton transformations} $T_{r}(A)$ of the shape operator $A$ of an $n$-dimensional foliation $\calf$
of a sub-Riemannian manifold $(M,{\mathcal D},g)$ are defined recursively or explicitly by
\begin{equation*}
  T_0(A) = \id_{\,T\calf},\quad T_r(A)=\sigma_r\id_{\,T\calf} -A\,T_{r-1}(A),\quad 1\le r\le n,
\end{equation*}
\begin{equation*}
 T_r(A) = \sum\nolimits_{j=0}^r(-1)^j\sigma_{r-j}\,A^j =\sigma_r\id_{\,T\calf} -\sigma_{r-1}\,A+\ldots+(-1)^r A^{\,r}.
\end{equation*}
\end{definition}

For example, $T_1(A)=\sigma_1\id_{\,T\calf} - A$ and $T_n(A)=0$. Notice that $A$ and $T_{r}(A)$ commute.

\begin{lemma}[see Lemma~1.3 in \cite{RWa-1}]\label{L-NTprop}
For the shape operator $A$ we have
 \begin{eqnarray*}
 \tr_{\,\calf} T_r(A)\eq(n-r)\,\sigma_r,\\
 \tr_{\,\calf} (A\cdot T_r(A)) \eq (r+1)\,\sigma_{r+1},\\
 \tr_{\,\calf} (A^2\cdot T_r(A)) \eq \sigma_{1}\,\sigma_{r+1}-(r+2)\,\sigma_{r+2},\\
 \tr_{\,\calf} ( T_{r-1}(A)(\nabla_X^\calf\,A)) \eq X(\sigma_{r}),\quad X\in T\calf.
 \end{eqnarray*}
\end{lemma}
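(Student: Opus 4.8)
The plan is to treat these as pointwise algebraic identities for the $g$-self-adjoint operator $A=A_N$ on the $n$-dimensional Euclidean space $(T_x\calf,\,g)$ and to derive them from the defining recursion $T_r=\sigma_r\,\id_{\,T\calf}-A\,T_{r-1}$ together with the symmetry of $A$; only the last (derivative) identity requires interfacing the algebra with the connection $\nabla^\calf$, and that is where I expect the real work to lie.

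For the first three identities I would first diagonalise $A$: since $A$ is $g$-self-adjoint on $T_x\calf$, pick an orthonormal eigenbasis $e_1,\dots,e_n$ with $Ae_i=\lambda_i e_i$. From the explicit formula $T_r=\sum_{j=0}^r(-1)^j\sigma_{r-j}A^j$ each $T_r$ is diagonal in this basis, and comparing the generating functions
\[
 \sum_{r\ge0}(T_r)_{ii}\,t^r=\Big(\sum_{r\ge0}\sigma_r\,t^r\Big)\sum_{j\ge0}(-1)^j\lambda_i^j t^j=\frac{\prod_{k}(1+t\lambda_k)}{1+t\lambda_i}=\prod_{k\ne i}(1+t\lambda_k)
\]
identifies the $i$th diagonal entry $(T_r)_{ii}=\sigma_r(\widehat{\lambda_i})$, the $r$th elementary symmetric polynomial in the eigenvalues with $\lambda_i$ omitted. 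Summing over $i$ and counting multiplicities then yields the first two formulas at once: each $r$-subset is missed by exactly $n-r$ indices, giving $\tr_\calf T_r=(n-r)\sigma_r$, and each $(r+1)$-subset $\{i\}\cup S$ arises in exactly $r+1$ ways, giving $\tr_\calf(A\,T_r)=\sum_i\lambda_i\sigma_r(\widehat{\lambda_i})=(r+1)\sigma_{r+1}$. Equivalently, one can avoid eigenvalues and obtain the second formula directly from Newton's identities applied to $\tr_\calf(A\,T_r)=\sum_{j=0}^r(-1)^j\sigma_{r-j}\tau_{j+1}$. The third formula then costs nothing extra: rewriting the recursion as $A\,T_r=\sigma_{r+1}\id_{\,T\calf}-T_{r+1}$ and multiplying by $A$ gives $A^2T_r=\sigma_{r+1}A-A\,T_{r+1}$, so taking traces and applying the second formula with $r$ replaced by $r+1$ produces $\tr_\calf(A^2T_r)=\sigma_1\sigma_{r+1}-(r+2)\sigma_{r+2}$.

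For the last identity I would use the linearisation property of $\sigma_r$. Starting from $\det(\id_{\,T\calf}+tA)=\sum_r\sigma_r t^r$ and Jacobi's formula, the adjugate expands as $\operatorname{adj}(\id_{\,T\calf}+tA)=\sum_{k\ge0}t^kT_k(A)$ (this is exactly the product of $\sum_r\sigma_rt^r$ with the Neumann series for $(\id_{\,T\calf}+tA)^{-1}$, which truncates to a polynomial since $T_n=0$), whence differentiating the determinant in any parameter $s$ gives $\sum_r\dot\sigma_r\,t^r=\sum_k t^{k+1}\tr_\calf(T_k\dot A)$; matching the coefficient of $t^r$ yields the pointwise identity $\dot\sigma_r=\tr_\calf(T_{r-1}(A)\,\dot A)$ valid for an arbitrary derivation. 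Taking the derivation to be $\nabla^\calf_X$ and $\dot A=\nabla^\calf_X A$ then gives $X(\sigma_r)=\tr_\calf(T_{r-1}(A)(\nabla^\calf_X A))$.

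The step I expect to be the main obstacle is this last one: I must justify that the purely algebraic linearisation transfers to the covariant setting, i.e.\ that $\nabla^\calf_X A$ is a well-defined $(1,1)$-tensor on $T\calf$, that $\nabla^\calf_X$ is a derivation commuting with the fibrewise trace (both following from $\nabla^\calf$ being metric-compatible and preserving $T\calf$), and that $X(\sigma_r)$ is indeed the image of $\sigma_r(A)$ under this derivation. Using the generating-function route rather than eigenvalue differentiation is deliberate here, since it sidesteps the non-smoothness of individual eigenvalues at points where $A$ has multiplicities while delivering the same identity.
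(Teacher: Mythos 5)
Your proposal is correct, but note that the paper itself offers no proof of this lemma: it is quoted verbatim as Lemma~1.3 of the reference [RWa-1] (Rovenski--Walczak), so there is nothing internal to compare against. Your argument is a sound, self-contained reconstruction of the standard proof. The generating-function identification $\sum_{r}(T_r)_{ii}\,t^r=\prod_{k\ne i}(1+t\lambda_k)$, hence $(T_r)_{ii}=\sigma_r$ with $\lambda_i$ omitted, cleanly yields the first two trace formulas by the counting you describe, and the third follows at no cost from $A^2T_r(A)=\sigma_{r+1}A-A\,T_{r+1}(A)$ together with the second formula at level $r+1$. For the fourth identity, your route through $\operatorname{adj}(\id_{\,T\calf}+tA)=\sum_k t^kT_k(A)$ and Jacobi's formula gives the purely algebraic linearisation $\dot\sigma_r=\tr_{\,\calf}(T_{r-1}(A)\dot A)$ for an arbitrary derivation, and you correctly identify the only point needing care: that $\nabla^\calf_X$ is a metric-compatible derivation on $(1,1)$-tensors of $T\calf$ commuting with the fibrewise trace, so that substituting $\dot A=\nabla^\calf_X A$ produces $X(\sigma_r)$. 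Your remark about avoiding eigenvalue differentiation (which is problematic at points of multiplicity) is well taken; an equivalent and equally safe alternative is to express $\sigma_r$ polynomially in the power sums $\tau_j=\tr_{\,\calf}A^j$ and use $X(\tau_j)=j\,\tr_{\,\calf}(A^{j-1}\nabla^\calf_X A)$. In short: no gap, and your write-up supplies a proof the paper delegates to the literature.
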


On the other hand, since the (1,1)-tensors $A$ and $T_r(A)$ are self-adjoint, we have
\begin{equation}\label{E-Tr-adjoint}
 \<(\nabla^\calf_{X}\,T_r(A))Y, \,V\> = \<(\nabla^\calf_{X}\,T_r(A))V, \,Y\>,\quad X,Y,V\in T\calf.
\end{equation}

\section{Main results}

Here, we prove a series of integral formulas for a codimension-one foliated sub-Riemannian mani\-fold $(M,{\mathcal D},\calf,g)$
with ${\mathcal D}=T\calf\oplus\,{\rm span}(N)$.

Recall that the $\calf$-{divergence} of a vector~field $X$ on $(M,g)$ is defined by
\[
 \Div\,X
 =\sum\nolimits_{\,1\le i\le n}\<\nabla_{e_i}\,X, \,e_i\>,
\]
where $\{e_i\}$ is a local orthonormal frame of $T\calf$.
Following \cite{lw2}, define the $\calf$-divergence of the Newton transformation $T_r(A)$ by
\begin{equation*}
 \Div_\calf T_r(A) = \sum\nolimits_{\,1\le i\le n}(\nabla^\calf_{e_i}\,T_r(A))\,e_i.
\end{equation*}
For any $X\in{\mathcal D}$, define a linear operator ${\mathcal R}^P_{X}:T\calf\to T\calf$ by
\begin{equation*}
 {\mathcal R}^P_{\,X}: V \to R^P(V, X)N,\quad V\in T\calf.
\end{equation*}
We can view $\tr_{\,\calf}{\mathcal R}^P_{X}=\sum\nolimits_{\,1\le i\le n}\<R^P(e_i,X)N, \,e_i\>$
as the \textit{Ricci $P$-curvature} $\Ric^P_{X,N}$.
Then $\Ric^P_{N,N}=\tr_{\,\calf}{\mathcal R}^P_{N}$
is the Ricci $P$-curvature in the $N$-direction.

The following result generalizes \cite[Lemma~2.2]{lw2}.

\begin{proposition}
The leafwise divergence of $T_r(A)$ satisfies the inductive formula
\begin{eqnarray}\label{E-divNT-proof2}
 \<\Div_\calf T_r(A), X\> \eq -\<\Div_\calf T_{r-1}(A), \,A X\> +\tr_{\,\calf}(T_{r-1}(A)\,{\mathcal R}^P_{\,X}),
\end{eqnarray}
for $r>0$ and any vector field $X\in\Gamma(\calf)$; moreover, $\Div_\calf T_0(A)=0$. Equivalently,
\begin{equation}\label{E-divNTN-Z}
 \<\Div_\calf T_r(A),\,X\> = \sum\nolimits_{\,1\le j\le r} (-1)^{j-1}\tr_{\,\calf}(T_{r-j}(A)\,{\mathcal R}^P_{\,A^{j-1} X}).
\end{equation}
\end{proposition}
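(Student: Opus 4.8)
The plan is to establish the inductive relation \eqref{E-divNT-proof2} by differentiating the recursive definition of $T_r(A)$, and then to derive the closed form \eqref{E-divNTN-Z} by telescoping \eqref{E-divNT-proof2} down to $T_0(A)$. The assertion $\Div_\calf T_0(A)=0$ is immediate, since $T_0(A)=\id_{\,T\calf}$ is parallel and hence $(\nabla^\calf_{e_i}\id_{\,T\calf})\,e_i=0$ for every $i$.

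First I would apply $\nabla^\calf_{e_i}$ to $T_r(A)=\sigma_r\id_{\,T\calf}-A\,T_{r-1}(A)$, use the Leibniz rule, contract with $e_i$ and sum over $i$, obtaining
\begin{equation*}
 \Div_\calf T_r(A)=\sum\nolimits_{\,i}(e_i\sigma_r)\,e_i-\sum\nolimits_{\,i}(\nabla^\calf_{e_i}A)\,T_{r-1}(A)\,e_i-A\,\Div_\calf T_{r-1}(A),
\end{equation*}
where $A$ was pulled out of the last sum because it is a fixed pointwise operator. Pairing with a vector field $X\in\Gamma(\calf)$ and using that $A$ is self-adjoint, the last term becomes precisely $-\<\Div_\calf T_{r-1}(A),\,AX\>$, the first term on the right of \eqref{E-divNT-proof2}. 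It then remains to identify the contribution of the first two terms with $\tr_{\,\calf}(T_{r-1}(A)\,{\mathcal R}^P_X)$.

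This identification is the crux of the argument. I would use that $\nabla^\calf_{e_i}A$ is self-adjoint (being the metric covariant derivative of the self-adjoint operator $A$) to rewrite $\<(\nabla^\calf_{e_i}A)\,T_{r-1}(A)\,e_i,\,X\>=\<(\nabla^\calf_{e_i}A)\,X,\,T_{r-1}(A)\,e_i\>$, and then invoke the Codazzi equation \eqref{E-codazziAN} in the form $(\nabla^\calf_{e_i}A)\,X=(\nabla^\calf_{X}A)\,e_i-R^P(e_i,X)N$. The $(\nabla^\calf_{X}A)\,e_i$ part reassembles, via the self-adjointness of $T_{r-1}(A)$ together with the trace identity $\tr_{\,\calf}(T_{r-1}(A)(\nabla^\calf_{X}A))=X(\sigma_r)$ from Lemma~\ref{L-NTprop}, into $X(\sigma_r)=\sum\nolimits_{\,i}(e_i\sigma_r)\<e_i,X\>$, which exactly cancels the gradient term. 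The remaining part $R^P(e_i,X)N={\mathcal R}^P_X e_i$ collapses, again by $T_{r-1}(A)=T_{r-1}(A)^{*}$, to $\tr_{\,\calf}(T_{r-1}(A)\,{\mathcal R}^P_X)$. The main obstacle is precisely this bookkeeping: tracking where each self-adjointness is invoked and orienting the Codazzi substitution so that the $X(\sigma_r)$ terms cancel rather than reinforce.

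Finally, \eqref{E-divNTN-Z} follows by iterating \eqref{E-divNT-proof2}. Applying the inductive formula repeatedly to the term $\<\Div_\calf T_{r-j}(A),\,A^{j}X\>$ generates the alternating sum $\sum\nolimits_{\,1\le j\le r}(-1)^{j-1}\tr_{\,\calf}(T_{r-j}(A)\,{\mathcal R}^P_{A^{j-1}X})$ together with a residual term $(-1)^{r}\<\Div_\calf T_0(A),\,A^{r}X\>$, which vanishes because $\Div_\calf T_0(A)=0$. This yields the desired closed form.
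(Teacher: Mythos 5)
Your proposal is correct and follows essentially the same route as the paper: differentiate the recursion $T_r(A)=\sigma_r\id-A\,T_{r-1}(A)$, move $\nabla^\calf_{e_i}A$ across the inner product by self-adjointness, substitute the Codazzi-type identity \eqref{E-codazziAN} so that the $X(\sigma_r)$ contributions cancel via Lemma~\ref{L-NTprop}, and then telescope to get \eqref{E-divNTN-Z}. The only cosmetic quibble is the phrase ``$A$ was pulled out because it is a fixed pointwise operator'' --- the correct justification is simply the Leibniz rule $\nabla^\calf_{e_i}(A\,T_{r-1}(A))=(\nabla^\calf_{e_i}A)T_{r-1}(A)+A\,\nabla^\calf_{e_i}T_{r-1}(A)$ together with linearity of $A$, which is what you in fact use.
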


\begin{proof}
 Using the inductive definition of $T_r(A)$, we have
\begin{equation*}
 \Div_\calf T_r(A)=\nabla^\calf\sigma_r-A\Div_\calf T_{r-1}(A)
 -\sum\nolimits_{\,1\le i\le n}(\nabla^\calf_{e_i}\,A)\,T_{r-1}(A) e_i.
\end{equation*}
Using Codazzi type equation \eqref{E-codazziAN} and the last formula in Lemma~\ref{L-NTprop}, we obtain
\begin{eqnarray*}
 &&\sum\nolimits_{\,1\le i\le n}\<(\nabla^\calf_{e_i}\,A) T_{r-1}(A) e_i, \,X\>
 =\sum\nolimits_{\,1\le i\le n}\<T_{r-1}(A) e_i, \,(\nabla^\calf_{e_i}\,A)X\>\\
 && =\sum\nolimits_{\,1\le i\le n} \<T_{r-1}(A)e_i,\,(\nabla^\calf_{X}A) e_i -R^P(e_i, X)N\> \\
 &&=\tr_{\,\calf}(T_{r-1}(A)(\nabla^\calf_{X}\,A)) -\sum\nolimits_{\,1\le i\le n}\<R^P(e_i, X)N,\, T_{r-1}(A) e_i\>\\
 && = X(\sigma_r)-\tr_{\,\calf}(T_{r-1}(A)\,{\mathcal R}^P_{\,X}).
\end{eqnarray*}
Hence, the inductive formula (\ref{E-divNT-proof2}) holds. Finally, \eqref{E-divNTN-Z} follows directly from the above.
\end{proof}

\begin{remark}\rm
(a) If $P=\id_{\,TM}$, i.e., $\calf$ is a codimension-one foliation of $M$, then $R^P=R$, and using the symmetry
$\<R(X,Y)U, \,V\>=\<R(U,V)X, \,Y\>$, we simplify equation \eqref{E-divNT-proof2} to the form
\begin{equation*}
 \Div_\calf T_r(A) = - A\Div_\calf T_{r-1}(A) +\sum\nolimits_{\,1\le i\le n} (R(N, \,T_{r-1}(A) e_i)e_i)^\top,
\end{equation*}
where $^\top$ denotes the orthogonal projection on the vector bundle $T\calf$, see \cite[Lemma~2.2]{lw2}.

(b) Let the distribution $T\calf$ be $P$-\textit{curvature invariant}, that is
\begin{equation}\label{E-Pcurv-inv}
 R^P(X,Y)V \in T\calf,\quad X,Y,V\in T\calf.
\end{equation}
In view of \eqref{E-R-symm2}, equation \eqref{E-divNTN-Z} implies that $\Div_\calf T_r(A)=0$ for every $r\ge0$.
Condition \eqref{E-Pcurv-inv} is obviously satisfied, if the distribution $T\calf$ is
auto-parallel, i.e., $\nabla_XY\in\Gamma(T\calf)$ for all $X,Y\in\Gamma(T\calf)$.
A~sufficient condition for \eqref{E-Pcurv-inv} is the following equality (for some real constant $c$):
\begin{equation}\label{E-Pcurv-c}
 R^P(X,Y)V= c\,(\/\<Y,V\>X-\<X,V\>Y\/),\quad X,Y,V\in {\mathcal D}.
\end{equation}
\end{remark}

The following result generalizes \cite[Proposition~3.3]{lw2}.

\begin{proposition}\label{P-Cintfiv01p1}
We have
\begin{eqnarray*}
 \Div_{\calf}(T_r(A) Z) \eq\underline{\<\Div_\calf T_r(A), \,Z\>}
 +\tr_{\,\calf}(T_r(A) {\mathcal R}^P_{N}) + \<T_r(A) Z, Z\>  \\
 && -\,(r+2)\sigma_{r+2}-N(\sigma_{r+1})+\sigma_{1}\sigma_{r+1},
\end{eqnarray*}
where $Z=P\,\nabla_NN$ and the underlined term is given by \eqref{E-divNTN-Z} with $X=Z$.
\end{proposition}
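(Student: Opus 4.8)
The plan is to differentiate the vector field $T_r(A)Z$ leafwise, apply the Leibniz rule, and reduce the statement to three already-available identities. Since $\<\nabla_N N,N\>=\frac12\,N\<N,N\>=0$, the field $Z=P\nabla_N N$ lies in $T\calf$, so for the local orthonormal frame $\{e_i\}$ of $T\calf$ one has $\Div_\calf(T_r(A)Z)=\sum_{\,1\le i\le n}\<\nabla^\calf_{e_i}(T_r(A)Z),e_i\>$, the normal components being irrelevant because $e_i\in T\calf$ and $T_r(A)Z\in T\calf$. I would then expand by Leibniz,
\[
 \nabla^\calf_{e_i}(T_r(A)Z)=(\nabla^\calf_{e_i}T_r(A))Z+T_r(A)\,\nabla^\calf_{e_i}Z,
\]
and handle the two resulting sums separately.

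For the first sum, the self-adjointness identity \eqref{E-Tr-adjoint} gives $\<(\nabla^\calf_{e_i}T_r(A))Z,e_i\>=\<(\nabla^\calf_{e_i}T_r(A))e_i,Z\>$; summing over $i$ produces exactly $\<\Div_\calf T_r(A),Z\>$, the underlined term, whose explicit value is \eqref{E-divNTN-Z} with $X=Z$. For the second sum, I would first use that $T_r(A)$ is self-adjoint to rewrite $\<T_r(A)\nabla^\calf_{e_i}Z,e_i\>=\<\nabla^\calf_{e_i}Z,T_r(A)e_i\>$, and then invoke Lemma~\ref{L-31-myC}: working pointwise in the adapted frame of that lemma, equation \eqref{E-EiNNC} expresses $\<\nabla_{e_i}Z,e_j\>$ as the components of $A^2+{\mathcal R}^P_N-\nabla^\calf_N A$ plus the rank-one term $\<Z,e_i\>\<Z,e_j\>$. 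Substituting and contracting against $\<T_r(A)e_i,e_j\>$ splits the second sum into $\tr_{\,\calf}(T_r(A)A^2)+\tr_{\,\calf}(T_r(A){\mathcal R}^P_N)-\tr_{\,\calf}(T_r(A)\nabla^\calf_N A)+\<T_r(A)Z,Z\>$; the last term appears because $\sum_{\,1\le i\le n}\<Z,e_i\>e_i=Z$.

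It then remains to evaluate the two algebraic traces by Lemma~\ref{L-NTprop}. Since $A$ and $T_r(A)$ commute, the third identity there gives $\tr_{\,\calf}(T_r(A)A^2)=\sigma_1\sigma_{r+1}-(r+2)\sigma_{r+2}$, while the last identity, applied in the direction $N$, yields $\tr_{\,\calf}(T_r(A)\nabla^\calf_N A)=N(\sigma_{r+1})$. Collecting all terms reproduces the claimed formula.

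The point needing care — and the main obstacle — is the legitimacy of computing at a point in the special frame of Lemma~\ref{L-31-myC} while differentiating a product: the Leibniz expansion must be carried out before specializing the frame, since the adapted frame only annihilates certain first derivatives at $x$ and \eqref{E-EiNNC} is a pointwise statement. Because $\Div_\calf(T_r(A)Z)$ and every term on the right-hand side are frame-independent (traces, divergences, and scalar products), the pointwise identity obtained in the adapted frame extends to all of $M$. A secondary subtlety is justifying the use of the last formula of Lemma~\ref{L-NTprop} along the non-tangential direction $N$; this is legitimate because that formula is the derivation rule $X(\sigma_{r+1})=\tr_{\,\calf}(T_r(A)(\nabla^\calf_X A))$ for the symmetric function $\sigma_{r+1}$, which holds for differentiation along an arbitrary vector field and not only for $X\in T\calf$.
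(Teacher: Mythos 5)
Your proposal is correct and follows essentially the same route as the paper's own proof: Leibniz plus the self-adjointness identity \eqref{E-Tr-adjoint} to peel off $\<\Div_\calf T_r(A),Z\>$, then Lemma~\ref{L-31-myC} to express $\sum_i\<\nabla_{e_i}Z,T_r(A)e_i\>$, and finally Lemma~\ref{L-NTprop} to evaluate $\tr_{\,\calf}(T_r(A)A^2)$ and $\tr_{\,\calf}(T_r(A)\nabla^\calf_N A)$. Your two cautionary remarks (computing in the adapted frame only after the Leibniz expansion, and extending the derivation formula $X(\sigma_{r+1})=\tr_{\,\calf}(T_r(A)\nabla^\calf_X A)$ to the direction $N$) are both sound and address points the paper passes over silently.
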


\proof Using \eqref{E-Tr-adjoint}, we can compute the divergence of the vector field $T_r(A)Z$ as follows:
\begin{equation*}
 \Div_\calf T_r(A) Z = \sum\nolimits_{\,1\le i\le n}\<\nabla_{e_i} (T_r(A) Z), e_i\>
 = \<\Div_\calf T_r(A),\, Z\> +\sum\nolimits_{\,1\le i\le n} \<\nabla_{e_i}{Z},\, T_r(A) e_i\>.
\end{equation*}
Using (\ref{E-EiNNC}) of Lemma \ref{L-31-myC}, we compute $\sum\nolimits_{\,1\le i\le n}\<\nabla_{e_i}{Z}, \,T_r(A) e_i\>$ as
\begin{eqnarray*}
 &&\sum\nolimits_{\,1\le i\le n}\big(\<A^2 e_i + {R}^P(e_i,N)N -(\nabla^\calf_N A) e_i, \,T_r(A) e_i\> +\<Z, e_i\>\,\<Z, \,T_r(A) e_i\>\big)\\
 &&\qquad= -\tr_{\,\calf}\big(T_r(A)(\nabla^\calf_N A-A^2-{\mathcal R}^P_{N})\big) +\<T_r(A)Z,\, Z\>.
\end{eqnarray*}
By Lemma~\ref{L-NTprop}, we can write
\begin{equation*}
 \tr_{\,\calf}\big( T_r(A)(\nabla_N A-A^2-{\mathcal R}^P_{N})\big) = N(\sigma_{r+1})-\sigma_{1}\sigma_{r+1}+(r+2)\sigma_{r+2}
 -\tr_{\,\calf}(T_r(A)\,{\mathcal R}^P_{N}).
\end{equation*}
 Finally, we have
\begin{eqnarray*}
 && \sum\nolimits_{\,1\le i\le n}\<\nabla_{e_i}{Z},\, T_r(A) e_i\> = -N(\sigma_{r+1})+\sigma_{1}\,\sigma_{r+1}-(r+2)\sigma_{r+2}\\
 && +\,\<T_r(A)Z, \,Z\>
 +\sum\nolimits_{\,1\le i\le r}(-1)^{j-1}\tr_{\,\calf}(T_{r-j}(A)\,{\mathcal R}^P_{A^{j-1}Z}).\quad\qed
\end{eqnarray*}

From the above, applying the Divergence Theorem to any compact leaf, we get

\begin{theorem}\label{T-mainLW-1}
For any compact leaf $L$ of $\calf$ and $Z=P\,\nabla_NN$ we have
\begin{eqnarray*}
 &&\int_{L}\big(
 (r+2)\,\sigma_{r+2} +N(\sigma_{r+1}) -\sigma_{1} \sigma_{r+1} -\tr_{\,\calf}(T_r(A){\mathcal R}^P_{N}) \\
 &&\  -\,\<T_r(A) Z, Z\> -\sum\nolimits_{\,1\le i\le r}(-1)^{j-1}\tr_{\,\calf}(T_{r-j}(A)\,{\mathcal R}^P_{A^{j-1}Z}) \big)\, {\rm d}\vol_L =0.
\end{eqnarray*}
\end{theorem}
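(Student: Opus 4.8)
The plan is to obtain the formula as the integrated form of the pointwise divergence identity in Proposition~\ref{P-Cintfiv01p1}, showing that the divergence term on the left contributes nothing over a compact leaf. First I would record the structural fact that makes this possible: the field $T_r(A)\,Z$ is tangent to $\calf$. Indeed, $Z=P\,\nabla_N N$ is horizontal by construction, and since
\begin{equation*}
 \<Z,N\>=\<P\nabla_N N,\,N\>=\<\nabla_N N,\,N\>=\tfrac12\,N\<N,N\>=0,
\end{equation*}
we get $Z\in T\calf$; as $T_r(A)$ maps $T\calf$ into itself, $T_r(A)\,Z\in\Gamma(T\calf)$ as well. This is exactly what is needed to invoke the Divergence Theorem on a leaf.

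Next I would identify the leafwise divergence $\Div_\calf$ with the intrinsic divergence on each leaf. For $X\in\Gamma(T\calf)$ and an orthonormal frame $\{e_i\}$ of $T\calf$, the induced Levi-Civita connection $\nabla^L$ on a leaf $L$ satisfies $\nabla^L_{e_i}X=(\nabla_{e_i}X)^\top$, whence
\begin{equation*}
 \Div_L X=\sum\nolimits_{\,1\le i\le n}\<\nabla^L_{e_i}X,\,e_i\>=\sum\nolimits_{\,1\le i\le n}\<\nabla_{e_i}X,\,e_i\>=\Div_\calf X,
\end{equation*}
the middle equality holding because the $(T\calf)^\bot$-component of $\nabla_{e_i}X$ is $g$-orthogonal to $e_i\in T\calf$. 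Consequently, the restriction of $\Div_\calf(T_r(A)Z)$ to $L$ is precisely the intrinsic divergence of the tangent vector field $T_r(A)Z$ on the closed Riemannian manifold $L$.

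With these two points established, the Divergence Theorem on the compact (boundaryless) leaf $L$ yields $\int_L\Div_\calf(T_r(A)Z)\,{\rm d}\vol_L=0$. I would then substitute the right-hand side of Proposition~\ref{P-Cintfiv01p1} — with the underlined term expanded through \eqref{E-divNTN-Z} at $X=Z$, i.e. $\<\Div_\calf T_r(A),Z\>=\sum_{1\le j\le r}(-1)^{j-1}\tr_{\,\calf}(T_{r-j}(A)\,{\mathcal R}^P_{A^{j-1}Z})$ — into this vanishing integral, and transpose the six summands while flipping their signs. The terms $-(r+2)\sigma_{r+2}$, $-N(\sigma_{r+1})$, $+\sigma_1\sigma_{r+1}$ change sign to $+(r+2)\sigma_{r+2}$, $+N(\sigma_{r+1})$, $-\sigma_1\sigma_{r+1}$, and $+\tr_{\,\calf}(T_r(A){\mathcal R}^P_N)$, $+\<T_r(A)Z,Z\>$ together with the expanded underlined sum acquire minus signs, reproducing the stated identity verbatim.

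I do not anticipate a genuine obstacle, since all the analytic substance is carried by Proposition~\ref{P-Cintfiv01p1} (which in turn rests on Lemmas~\ref{L-31-myC} and~\ref{L-NTprop} and the Codazzi equation \eqref{E-codazziAN}); the only point warranting care is the coincidence $\Div_\calf=\Div_L$ on the leaf, which is what legitimizes the leafwise integration by parts. I would also emphasize that compactness of $L$ is essential — it is what guarantees the boundary-free integral of a divergence vanishes — and that this is why the formula is asserted leaf by leaf rather than globally over $M$.
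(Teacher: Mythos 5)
Your proposal is correct and follows exactly the paper's route: the paper derives the theorem by applying the Divergence Theorem on a compact leaf to the identity of Proposition~\ref{P-Cintfiv01p1}, with the underlined term expanded via \eqref{E-divNTN-Z} at $X=Z$. The only difference is that you spell out the (true and worth noting) facts that $T_r(A)Z\in\Gamma(T\calf)$ and that $\Div_\calf$ agrees with the intrinsic divergence on the leaf, which the paper leaves implicit.
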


\begin{corollary}
Let $\tau_1={\rm const}$ and $\Ric^P_{N,N}
>0$. Then $\calf$ has no compact leaves.
\end{corollary}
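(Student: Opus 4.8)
The plan is to specialize Theorem~\ref{T-mainLW-1} to the lowest order $r=0$ and to show that, under the stated hypotheses, the integrand becomes pointwise strictly negative, which is incompatible with the integral vanishing over a compact leaf. Setting $r=0$, we have $T_0(A)=\id_{\,T\calf}$, the summation $\sum_{1\le j\le r}$ is empty, and the two trace terms simplify to $\tr_{\,\calf}(T_0(A)\,{\mathcal R}^P_{N})=\tr_{\,\calf}{\mathcal R}^P_{N}=\Ric^P_{N,N}$ and $\<T_0(A)Z,Z\>=\<Z,Z\>$. Hence Theorem~\ref{T-mainLW-1} reduces to
\begin{equation*}
 \int_{L}\big(2\,\sigma_2+N(\sigma_1)-\sigma_1^2-\Ric^P_{N,N}-\<Z,Z\>\big)\,{\rm d}\vol_L=0 .
\end{equation*}

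Next I would exploit two algebraic simplifications. Since $\sigma_1=\tau_1$ is assumed constant, $N(\sigma_1)=0$. By the identity \eqref{E-sigma2-tau1-2} we have $2\,\sigma_2=\tau_1^2-\tau_2$, and because $\sigma_1^2=\tau_1^2$, this gives $2\,\sigma_2-\sigma_1^2=-\tau_2=-\tr(A^2)$. Thus the integrand collapses to $-\tr(A^2)-\Ric^P_{N,N}-\<Z,Z\>$.

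Finally I would conclude by a sign argument. As $A$ is self-adjoint, $\tr(A^2)=\sum_i\lambda_i^2\ge0$, and clearly $\<Z,Z\>\ge0$, whereas $\Ric^P_{N,N}>0$ by hypothesis. Therefore the integrand is strictly negative at every point of $L$, forcing the integral to be strictly negative and contradicting the equality to zero supplied by Theorem~\ref{T-mainLW-1}. Hence no compact leaf $L$ can exist. The argument is essentially forced; the only place demanding care is the bookkeeping of the $r=0$ terms and the correct sign in \eqref{E-sigma2-tau1-2}, which is exactly what makes the quadratic-in-$A$ contribution nonpositive and thus works in our favour.
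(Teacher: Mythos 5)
Your proof is correct and follows essentially the same route as the paper: the paper applies Proposition~\ref{P-Cintfiv01p1} with $r=0$ to get $\Div_\calf Z=\tau_2-N(\tau_1)+\Ric^P_{N,N}+\<Z,Z\>>0$ and then invokes the Divergence Theorem on a compact leaf, while you use the already-integrated form of the same identity (Theorem~\ref{T-mainLW-1} with $r=0$) together with the identity $2\sigma_2-\sigma_1^2=-\tau_2$. The bookkeeping of the $r=0$ terms and the sign analysis are both right.
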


\begin{proof}
For $r=0$ and $Z=P\nabla_NN$, by Proposition~\ref{P-Cintfiv01p1},
\begin{equation*}
 \Div_\calf {Z} =\tau_{2}- N(\tau_{1})+\tr_{\,\calf}{\mathcal R}^P_{N}+\<{Z}, {Z}\>.
\end{equation*}
By~conditions, $N(\tau_{1})=0$ and $\tr_{\,\calf}{\mathcal R}^P_{N}=\Ric^P_{N,N}>0$, hence $\Div_\calf {Z}>0$. Applying the Divergence Theorem to a compact leaf yields a contradiction.
\end{proof}

For any vector field $X$ in ${\mathcal D}$, we have
\begin{equation}\label{E-divX}
 \Div X = \Div_\calf X -\<X, \,Z\> -\<X, \,H^\bot\>,
\end{equation}
where $H^\bot$ is the mean curvature vector field of the distribution ${\mathcal D}^\bot$ in $(M,g)$.
Recall that a distribution on a Riemannian manifold is called \textit{harmonic} if its mean curvature vector field vanishes.
There are topological restrictions for the existence of a Riemannian metric on closed manifold,
for which a given distribution becomes harmonic, see \cite{su2}.

The following statement generalizes 
\eqref{E-sigma1}.

\begin{theorem}
For a closed sub-Riemannian manifold $(M,{\mathcal D},g)$ with ${\mathcal D}=T\calf\oplus\,{\rm span}(N)$
and a harmonic orthogonal distribution ${\mathcal D}^\bot$, the following integral formula is valid:
\begin{equation}\label{E-int-sigma1}
 \int_M \sigma_{1}\,{\rm d}\vol_g = 0.
\end{equation}
\end{theorem}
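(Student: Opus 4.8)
The plan is to apply the Divergence Theorem on the closed manifold $M$ to the vector field $N$ itself, in the spirit of Reeb's original argument, using the pointwise identity \eqref{E-divX} with $X=N\in{\mathcal D}$.

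First I would compute the leafwise divergence $\Div_\calf N$. Since each frame field $e_i\in T\calf\subset{\mathcal D}$ satisfies $Pe_i=e_i$, the self-adjointness $P=P^*$ together with the definition \eqref{E-A-D} of the shape operator gives $\<\nabla_{e_i}N,e_i\>=\<P\nabla_{e_i}N,e_i\>=\<\nabla^P_{e_i}N,e_i\>=-\<A e_i,e_i\>$. Summing over $i$ yields $\Div_\calf N=-\tr A=-\sigma_1$.

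Next I would show that the two correction terms in \eqref{E-divX} vanish. As $N$ is a unit field, $\nabla_N N$ is $g$-orthogonal to $N$; combined with $PN=N$ and $P=P^*$ this gives $\<N,Z\>=\<N,P\nabla_N N\>=\<N,\nabla_N N\>=0$. For the last term, the mean curvature vector $H^\bot$ of ${\mathcal D}^\bot$ is a section of ${\mathcal D}$, and the harmonicity hypothesis means precisely that $H^\bot\equiv0$, so $\<N,H^\bot\>=0$. Hence \eqref{E-divX} collapses to $\Div N=-\sigma_1$, and integrating over the closed manifold gives $0=\int_M\Div N\,{\rm d}\vol_g=-\int_M\sigma_1\,{\rm d}\vol_g$, which is \eqref{E-int-sigma1}.

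The computation is short, so there is no serious analytic obstacle; the only point demanding care is the handling of the $H^\bot$ term. One must be sure that harmonicity annihilates the whole term $\<N,H^\bot\>$ and not merely the part of $H^\bot$ lying in a proper subbundle — this is guaranteed because harmonicity of ${\mathcal D}^\bot$ forces $H^\bot=0$ as a vector field, independently of how $N$ decomposes relative to ${\mathcal D}^\bot$. I would also note that the sign bookkeeping (the minus sign in $\Div_\calf N=-\sigma_1$) is immaterial for the final conclusion, since the integral is set equal to zero.
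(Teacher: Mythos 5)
Your proposal is correct and follows essentially the same route as the paper: apply the Divergence Theorem on the closed manifold $M$ to the field $N$, using the decomposition \eqref{E-divX} with $X=N$, the identity $\Div_\calf N=-\sigma_1$, and the harmonicity hypothesis $H^\bot=0$. Your version merely spells out two points the paper leaves implicit (the projector computation behind $\Div_\calf N=-\sigma_1$ and the vanishing $\<N,Z\>=0$ from $|N|=1$), which is fine.
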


\begin{proof} Recall that $\sigma_{1}=\tr A$, see \eqref{E-A-D}, and observe that
\[
 \Div_\calf N = \sum\nolimits_{\,1\le i\le n}\<\nabla_{e_i}\,N, \,e_i\> = -\sigma_1.
\]
Thus,
\begin{equation}\label{E-divN}
 \Div N = \Div_\calf N -\<N,\,H^\bot\> = -\sigma_1 -\<N,\,H^\bot\>.
\end{equation}
Applying the Divergence Theorem and using the assumption $H^\bot=0$, yields \eqref{E-int-sigma1}.
\end{proof}

The following integral formula generalizes \eqref{E-intNTp1}.

\begin{theorem}\label{T-main01p1}
For a closed sub-Riemannian manifold $(M,g)$ with ${\mathcal D}=T\calf\oplus\,{\rm span}(N)$
and a harmonic orthogonal distribution ${\mathcal D}^\bot$, the following integral formula is valid:
\begin{eqnarray}\label{E-intNTNTp}
\nonumber
 && \int_M \Big( (r+2)\,\sigma_{r+2} -\tr_{\,\calf}(T_r(A){\mathcal R}^P_{N})\\
 &&  -\sum\nolimits_{\,1\le j\le r}(-1)^{j-1}\tr_{\,\calf}(T_{r-j}(A){\mathcal R}^P_{A^{j-1}\nabla^P_NN}) \Big)\,{\rm d}\vol_g = 0.
\end{eqnarray}
\end{theorem}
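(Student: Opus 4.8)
The plan is to integrate the identity from Proposition~\ref{P-Cintfiv01p1} over all of $M$ and exploit the harmonicity assumption $H^\bot = 0$ to discard the boundary-type terms that were harmless on a single leaf but must now be controlled globally. First I would start from the pointwise formula in Proposition~\ref{P-Cintfiv01p1}, namely
\begin{equation*}
 \Div_{\calf}(T_r(A) Z) = \<\Div_\calf T_r(A), Z\> +\tr_{\,\calf}(T_r(A){\mathcal R}^P_{N}) +\<T_r(A) Z, Z\> -(r+2)\sigma_{r+2} -N(\sigma_{r+1}) +\sigma_1\sigma_{r+1},
\end{equation*}
and rewrite the underlined divergence term using \eqref{E-divNTN-Z} with $X = Z = \nabla^P_N N$, so that $\<\Div_\calf T_r(A), Z\>$ becomes $\sum_{1\le j\le r}(-1)^{j-1}\tr_{\,\calf}(T_{r-j}(A){\mathcal R}^P_{A^{j-1}Z})$.

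Next I would pass from the leafwise divergence $\Div_\calf$ to the full Riemannian divergence $\Div$ using \eqref{E-divX}, applied to the vector field $X = T_r(A) Z \in \mathcal D$. This gives
\begin{equation*}
 \Div(T_r(A) Z) = \Div_\calf(T_r(A) Z) -\<T_r(A) Z, Z\> -\<T_r(A) Z, H^\bot\>.
\end{equation*}
The key simplifications then come from two sources: the assumption $H^\bot = 0$ kills the last term, and (crucially) it cancels the $\<T_r(A) Z, Z\>$ contribution against the identically-named term in Proposition~\ref{P-Cintfiv01p1}. I would also handle the term $N(\sigma_{r+1})$: writing it as a divergence of a vector field in the $N$-direction and invoking \eqref{E-divN}, one sees that under $H^\bot = 0$ the integral of $N(\sigma_{r+1})$ pairs with $\sigma_1\sigma_{r+1}$. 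Concretely, since $\Div(\sigma_{r+1} N) = \sigma_{r+1}\Div N + N(\sigma_{r+1})$ and $\Div N = -\sigma_1$ when $H^\bot = 0$, the combination $-N(\sigma_{r+1}) + \sigma_1\sigma_{r+1}$ equals $-\Div(\sigma_{r+1} N)$, a pure divergence.

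Having reorganized everything, the integrand of \eqref{E-intNTNTp} should be exactly the negative of a sum of full divergences, namely $\Div(T_r(A) Z) + \Div(\sigma_{r+1} N)$, plus the curvature and $\sigma_{r+2}$ terms. I would then apply the Divergence Theorem on the closed manifold $M$: every genuine $\Div(\cdot)$ term integrates to zero, leaving precisely
\begin{equation*}
 \int_M \Big( (r+2)\sigma_{r+2} -\tr_{\,\calf}(T_r(A){\mathcal R}^P_N) -\sum_{1\le j\le r}(-1)^{j-1}\tr_{\,\calf}(T_{r-j}(A){\mathcal R}^P_{A^{j-1}\nabla^P_N N}) \Big)\,{\rm d}\vol_g = 0,
\end{equation*}
which is the claimed formula (after identifying $Z = \nabla^P_N N$). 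The main obstacle I anticipate is the careful bookkeeping in the transition from $\Div_\calf$ to $\Div$: one must verify that the $\<T_r(A)Z,Z\>$ terms cancel cleanly and that the $N(\sigma_{r+1})$ and $\sigma_1\sigma_{r+1}$ terms genuinely assemble into the exact divergence $\Div(\sigma_{r+1} N)$ rather than leaving a residual term. Everything hinges on the harmonicity hypothesis $H^\bot = 0$ being used in exactly the right places, since without it both the $\<\cdot, H^\bot\>$ coupling and the formula \eqref{E-divN} for $\Div N$ would contribute extra terms that do not integrate away.
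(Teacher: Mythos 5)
Your proposal is correct and follows essentially the same route as the paper: both combine Proposition~\ref{P-Cintfiv01p1} with \eqref{E-divX} applied to $X=T_r(A)Z$ and with $\Div(\sigma_{r+1}N)$ computed via \eqref{E-divN}, then integrate the resulting exact divergence $\Div(T_r(A)Z+\sigma_{r+1}N)$ over the closed manifold and insert \eqref{E-divNTN-Z}. One small bookkeeping remark: the cancellation of $\<T_r(A)Z,Z\>$ is produced by the $-\<X,Z\>$ term already present in \eqref{E-divX}, not by the hypothesis $H^\bot=0$, which is needed only to kill $\<T_r(A)Z,H^\bot\>$ and to obtain $\Div N=-\sigma_1$.
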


\begin{proof}
Using \eqref{E-divN}, we calculate the divergence of $\sigma_{r+1}\cdot N$ as
\[
 \Div(\sigma_{r+1}\cdot N) =  \sigma_{r+1}\Div N + N(\sigma_{r+1}) = N(\sigma_{r+1}) - \sigma_{1}\sigma_{r+1}.
\]
Then, by \eqref{E-divX} with $X=T_r(A)Z$,
(since ${\mathcal D}^\bot$ is a harmonic distribution), we get
\begin{equation*}
 \Div\big(T_r(A) Z  +\sigma_{r+1}\cdot N\big) = \<\Div_\calf T_r(A), \,Z\> -(r+2)\,\sigma_{r+2} +\tr_{\,\calf}(T_r(A) {\mathcal R}^P_{N}).
\end{equation*}
Thus, by Proposition~\ref{P-Cintfiv01p1}, we get \eqref{E-intNTNTp}.
\end{proof}

\begin{corollary}
For $r=0$, \eqref{E-intNTNTp} gives us the following generalization of \eqref{E-sigma2}:
\begin{equation}\label{E-sigma2-new}
 \int_M (2\,\sigma_2-\Ric^P_{N,N})\,{\rm d}\vol_g=0.
\end{equation}
For~$n=1$, we get $\sigma_2=0$, thus, \eqref{E-sigma2-new} gives the zero integral of ``Gaussian $P$-curvature" of~${\mathcal D}$.
\end{corollary}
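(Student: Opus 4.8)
The plan is to derive both assertions by specializing the integral formula \eqref{E-intNTNTp} of Theorem~\ref{T-main01p1} to the lowest order $r=0$, and then to identify the geometric content of the resulting integrand when the foliation is one-dimensional. No new estimate is needed; everything reduces to substitution and to reading off the meaning of the terms.

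First I would put $r=0$ in \eqref{E-intNTNTp}. The leading term $(r+2)\,\sigma_{r+2}$ becomes $2\,\sigma_2$. For the curvature term I would invoke the base case $T_0(A)=\id_{\,T\calf}$ of the Newton transformations, which gives
\[
 \tr_{\,\calf}(T_0(A)\,{\mathcal R}^P_{N}) = \tr_{\,\calf}{\mathcal R}^P_{N} = \Ric^P_{N,N},
\]
the last equality being the definition of the Ricci $P$-curvature in the $N$-direction recorded before Proposition~\ref{P-Cintfiv01p1}. Finally, the sum $\sum_{1\le j\le r}$ is indexed by the empty set when $r=0$ and so contributes nothing. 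Since the hypotheses of Theorem~\ref{T-main01p1} (that $M$ is closed and ${\mathcal D}^\bot$ is harmonic) are untouched, the vanishing of the integral persists, and substituting the three simplifications above into \eqref{E-intNTNTp} produces exactly \eqref{E-sigma2-new}.

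For the second statement I would further set $n=\dim\calf=1$. Then $T\calf$ has rank one and $A$ has a single eigenvalue $\lambda_1$; since $\sigma_2=\sum_{i<j}\lambda_i\lambda_j$ requires two distinct indices, it vanishes identically, so $\sigma_2\equiv 0$ and \eqref{E-sigma2-new} reduces to $\int_M \Ric^P_{N,N}\,{\rm d}\vol_g=0$. It then remains to recognize the integrand as a Gaussian-type curvature: with ${\mathcal D}={\rm span}(e_1,N)$ of rank two, one has $\Ric^P_{N,N}=\<R^P(e_1,N)N,e_1\>$, the single independent component of the curvature $R^P$ of the metric connection $\nabla^P$ on the rank-two bundle ${\mathcal D}$.

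The computation is purely a matter of bookkeeping, so the only delicate point --- the ``main obstacle'', modest as it is --- lies in this last identification. One should check that for a rank-two metric bundle the curvature is essentially scalar-valued: the antisymmetries, namely ${R}^P(Y,X)V=-{R}^P(X,Y)V$ and \eqref{E-R-symm2}, together with $\dim{\mathcal D}=2$, force the restriction of $R^P$ to ${\mathcal D}$ to be determined by the single function $\<R^P(e_1,N)N,e_1\>$, which is therefore well defined as the ``Gaussian $P$-curvature'' of ${\mathcal D}$.
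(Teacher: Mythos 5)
Your proposal is correct and follows exactly the route the paper intends (the paper gives no separate proof, treating the corollary as an immediate specialization of \eqref{E-intNTNTp} to $r=0$): the substitution $T_0(A)=\id_{\,T\calf}$, the empty sum for $r=0$, and the vanishing of $\sigma_2$ for $n=1$ are all that is needed. Your closing remark identifying $\Ric^P_{N,N}=\<R^P(e_1,N)N,e_1\>$ as the single independent component of $R^P$ on the rank-two bundle ${\mathcal D}$ correctly justifies the paper's phrase ``Gaussian $P$-curvature''.
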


\begin{example}\rm
(a) For $r=1,2$, (\ref{E-intNTNTp}) reduces to
\begin{equation*}
 \int_M\Big(3\,\sigma_3 -\tr_{\,\calf}(T_1(A)\,{\mathcal R}^P_{N} +{\mathcal R}^P_{Z})\Big)\, {\rm d}\vol_g=0,
\end{equation*}
\begin{equation}\label{E-intNTNT-k2}
 \int_{{\mathcal N}_1\calf}\Big(4\,\sigma_{4}+\<T_2(A) Z, {H}^\bot\>
 -\tr_{\,\calf}\big(T_2(A){\mathcal R}^P_{\,N} + T_1(A){\mathcal R}^P_{\,Z} -{\mathcal R}^P_{\,A Z}\big)\Big)\,{\rm d}\,\omega^\perp = 0.
\end{equation}
(b) If $N$ is a $P$-\textit{geodesic vector field}, i.e., $Z=0$, then (\ref{E-intNTNTp}) shortens to the formula
\begin{equation*}
 \int_{{\mathcal N}_1\calf}\big( (r+2)\,\sigma_{r+2} -\tr_{\,\calf}(T_r(A){\mathcal R}^P_{N}) \big)\,{\rm d}\,\omega^\bot=0.
\end{equation*}
\end{example}

\section{Some consequences}

Here, we apply our formulas to sub-Riemannian manifolds with restrictions on the shape operator $A$ or the induced curvature tensor $R^P$.

Let $(M,{\mathcal D},\calf,g)$ be a foliated sub-Riemannian manifold with ${\mathcal D}=T\calf\oplus\,{\rm span}(N)$, then


$\bullet$~$\calf$ will be called $P$-\textit{harmonic}, if
$\sigma_1=0$,

$\bullet$~$\calf$ will be called $P$-\textit{totally umbilical}, if
 $A=(\sigma_1/n)\id_{\,T\calf}$.

\smallskip\noindent
Obviously,
$P$-{harmonic} and $P$-{totally umbilical} distributions are
{harmonic} and {totally umbilical}, respectively, but the opposite is not true.
By \eqref{E-sigma2-new}, we obtain the following.

\begin{corollary}
Let $(M,{\mathcal D},g)$ be a closed sub-Riemannian manifold with
a harmonic orthogonal distribution ${\mathcal D}^\bot$.

\noindent\
{\rm(i)}~If $\,\Ric^P>0$, then there are no $P$-harmonic codimen\-sion-one foliations in ${\mathcal D}$.

\noindent\
{\rm(ii)}~If $\,\Ric^P<0$, then there are no $P$-totally umbilical codimension-one foliations in ${\mathcal D}$.
\end{corollary}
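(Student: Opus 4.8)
The plan is to read off both claims from the single integral identity \eqref{E-sigma2-new}, $\int_M(2\sigma_2-\Ric^P_{N,N})\,{\rm d}\vol_g=0$, by determining the pointwise sign of its integrand under each hypothesis. The algebraic engine is \eqref{E-sigma2-tau1-2}, i.e.\ $2\sigma_2=\tau_1^2-\tau_2=\sigma_1^2-\tr A^2$, which expresses $\sigma_2$ through the extrinsic data $\sigma_1$ and $\tr A^2$ and thereby fixes its sign in each of the two special cases.

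For part (i) I would impose the $P$-harmonic condition $\sigma_1=0$. Then \eqref{E-sigma2-tau1-2} collapses to $2\sigma_2=-\tr A^2\le0$ at every point, since $A$ is self-adjoint and $\tr A^2=\sum_i\lambda_i^2$. Reading $\Ric^P>0$ as the pointwise bound $\Ric^P_{N,N}>0$, the integrand $2\sigma_2-\Ric^P_{N,N}$ is then strictly negative everywhere. As $M$ is closed, this makes the left-hand side of \eqref{E-sigma2-new} strictly negative --- a contradiction --- so no $P$-harmonic codimension-one foliation can exist.

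For part (ii) I would substitute the $P$-totally umbilical condition $A=(\sigma_1/n)\,\id_{\,T\calf}$, whence $\tr A^2=\sigma_1^2/n$ and \eqref{E-sigma2-tau1-2} gives $2\sigma_2=\sigma_1^2-\sigma_1^2/n=\frac{n-1}{n}\,\sigma_1^2\ge0$ everywhere. With the hypothesis $\Ric^P_{N,N}<0$ the integrand $2\sigma_2-\Ric^P_{N,N}$ is now strictly positive pointwise, and integrating over the closed manifold again contradicts \eqref{E-sigma2-new}.

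There is essentially no analytic difficulty: \eqref{E-sigma2-new} already absorbs the Divergence Theorem and the harmonicity of ${\mathcal D}^\bot$, so the argument reduces to the two elementary sign computations above. The only step deserving attention is interpreting the curvature hypothesis as a \emph{strict} pointwise bound on $\Ric^P_{N,N}$, since it is exactly this strictness --- played against the weak one-sided bound on $2\sigma_2$ --- that converts the vanishing of the integral into a contradiction.
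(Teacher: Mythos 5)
Your proposal is correct and follows exactly the paper's own argument: both use \eqref{E-sigma2-tau1-2} to get the pointwise sign of $2\sigma_2$ in each case ($2\sigma_2=-\tau_2\le0$ for $P$-harmonic, $2\sigma_2=\frac{n-1}{n}\sigma_1^2\ge0$ for $P$-totally umbilical) and then contradict the integral formula \eqref{E-sigma2-new} via the strict sign of $\Ric^P_{N,N}$. No differences worth noting.
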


\begin{proof}
(i)~If $\calf$ is a $P$-harmonic codimension-one foliation in ${\mathcal D}$, then $2\,\sigma_2=-\tau_2\le 0$, see \eqref{E-sigma2-tau1-2}.
(ii)~If $\calf$ is a $P$-totally umbilical codimension-one foliation in ${\mathcal D}$, then $2\,\sigma_2=\frac{n-1}n\,(\sigma_1)^2\ge 0$.

In both cases, (i) and (ii), we get a contradiction to integral formula \eqref{E-sigma2-new}.
\end{proof}

The \textit{total $r$-th mean curvature\/} of $\calf$ are defined by
\[
 \sigma_{r}(\calf)=\int_{M}\sigma_{r}\,{\rm d}\vol_g.
\]
The~following corollary of Theorem~\ref{T-main01p1} generalizes \cite[Theorem~1.1]{blr}
(see also \cite[Section~4.1]{lw2}).

\begin{corollary}\label{C-P1}
Let $(M,{\mathcal D},g)$ be a closed sub-Riemannian manifold with ${\mathcal D}=T\calf\oplus\,{\rm span}(N)$
and a harmonic orthogonal distribution ${\mathcal D}^\bot$, satisfying condition \eqref{E-Pcurv-c}.
Then $\sigma_{r}(\calf)$ depends on $r,n,c$ and the volume of $(M,g)$ only, i.e., the following integral formula is valid:
\begin{equation*}
  \sigma_{r}(\calf) =
  \bigg\{\begin{array}{cc}
   c^{r/2}\Big(\begin{smallmatrix} n/2 \\ r/2 \end{smallmatrix}\Big) {\rm Vol}(M,g), & n,r\ {\rm even}, \\
   0 , & n\ {\rm or}\ r\ {\rm odd}.
  \end{array}
\end{equation*}
\end{corollary}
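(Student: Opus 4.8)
The plan is to substitute the constant-$P$-curvature hypothesis \eqref{E-Pcurv-c} into the integral formula \eqref{E-intNTNTp} of Theorem~\ref{T-main01p1}, collapse it to a two-term recurrence for the total mean curvatures $\sigma_r(\calf)$, and then solve that recurrence using the base datum supplied by \eqref{E-int-sigma1}.

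First I would compute the operators ${\mathcal R}^P_X$ under \eqref{E-Pcurv-c}. For $V\in T\calf$ one gets ${\mathcal R}^P_N(V)=R^P(V,N)N=c(\<N,N\>V-\<V,N\>N)=cV$, so ${\mathcal R}^P_N=c\,\id_{\,T\calf}$; while for any $X\in T\calf$ the same identity yields ${\mathcal R}^P_X=0$, since both $\<X,N\>$ and $\<V,N\>$ vanish. As $Z=\nabla^P_N N$ is orthogonal to $N$ and hence lies in $T\calf$, and $A$ preserves $T\calf$, every vector $A^{\,j-1}Z$ lies in $T\calf$; consequently all the terms $\tr_{\,\calf}(T_{r-j}(A)\,{\mathcal R}^P_{A^{j-1}\nabla^P_N N})$ in \eqref{E-intNTNTp} vanish. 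Combining this with $\tr_{\,\calf}(T_r(A)\,{\mathcal R}^P_N)=c\,\tr_{\,\calf}T_r(A)=c\,(n-r)\sigma_r$ from Lemma~\ref{L-NTprop}, formula \eqref{E-intNTNTp} reduces to the recurrence $(r+2)\,\sigma_{r+2}(\calf)=c\,(n-r)\,\sigma_r(\calf)$ for all $r\ge0$.

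Next I would record the base values $\sigma_0(\calf)=\int_M 1\,{\rm d}\vol_g={\rm Vol}(M,g)$ and $\sigma_1(\calf)=0$, the latter being precisely \eqref{E-int-sigma1} (valid because ${\mathcal D}^\bot$ is harmonic). The recurrence couples only indices of equal parity. Starting from $\sigma_1(\calf)=0$, an immediate induction gives $\sigma_r(\calf)=0$ for every odd $r$. For even $r=2k$ I would iterate the recurrence to the telescoping product $\sigma_{2k}(\calf)=c^k\,\frac{n(n-2)\cdots(n-2k+2)}{2^k\,k!}\,{\rm Vol}(M,g)$; when $n=2m$ is even the numerator equals $2^k\,m!/(m-k)!$, so the product collapses to $c^k\binom{m}{k}$, which is the asserted value $c^{\,r/2}\binom{n/2}{r/2}\,{\rm Vol}(M,g)$.

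The delicate point is the case $n$ odd, where the forward recurrence naively produces nonzero even-index totals. Here I would exploit the boundary instance $r=n-1$: since $\sigma_{n+1}\equiv0$ identically, the recurrence forces $c\,\sigma_{n-1}(\calf)=0$. Comparing with the product formula (a nonzero multiple of $c^{(n-1)/2}\,{\rm Vol}(M,g)$) shows that $c\ne0$ would contradict ${\rm Vol}(M,g)>0$; hence $c=0$, and then the recurrence gives $\sigma_r(\calf)=0$ for every $r\ge1$. This disposes of the odd-$n$ case. I expect the main obstacle to be exactly this consistency argument, reconciling the forward recurrence with the top-degree vanishing $\sigma_{n+1}\equiv0$; the remaining steps---the product manipulation and the parity bookkeeping---are routine.
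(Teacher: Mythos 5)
Your proposal is correct and follows the same route as the paper: substitute \eqref{E-Pcurv-c} into \eqref{E-intNTNTp} to obtain the recurrence $(r+2)\,\sigma_{r+2}(\calf)=c\,(n-r)\,\sigma_{r}(\calf)$, then induct from $\sigma_0(\calf)={\rm Vol}(M,g)$ and $\sigma_1(\calf)=0$ (the Reeb-type formula \eqref{E-int-sigma1}). You are in fact more complete than the paper on two points it passes over silently: the explicit check that ${\mathcal R}^P_{A^{j-1}Z}=0$ under \eqref{E-Pcurv-c} (needed to kill the sum over $j$ in \eqref{E-intNTNTp}), and the case $n$ odd with $r$ even, which you settle by using the $r=n-1$ instance together with $\sigma_{n+1}\equiv 0$ to force $c=0$, whereas the paper's induction only treats $r$ odd and $n,r$ both even.
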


\begin{proof} Using Lemma~\ref{L-NTprop}, we get
\[
 \tr_{\,\calf}(T_r(A){\mathcal R}^P_N) = \sum\nolimits_{\,1\le i\le n} \<R^P(e_i,N)N, \,T_r(A)e_i\>
 = c\,\tr_{\,\calf}T_r(A) = c(n-r)\,\sigma_r.
\]
From \eqref{E-intNTNTp} we obtain the equality
\begin{equation}\label{E-cor1}
 (r+2)\,\sigma_{r+2}(\calf) =  c(n-r)\,\sigma_{r}(\calf) .
\end{equation}
Since $\sigma_{1}(\calf) =0$, see \eqref{E-int-sigma1}, by induction we obtain $\sigma_{r}(\calf) =0$ for any odd $r$.
For even $r=2s$ and $n=2l$, using \eqref{E-cor1} and induvtion implies
$\sigma_{2s}(\calf) =c^{s}\big(\begin{smallmatrix} l \\ s \end{smallmatrix}\big) {\rm Vol}(M,g)$.
\end{proof}

We can get similar formulas when $(M,{\mathcal D},g)$ has the property
\begin{equation}\label{E-P-Einst}
 \Ric^P_{X,N}=C\,\<X,\,N\>,\quad X\in{\mathcal D},
\end{equation}
for some $C\in\RR$ and $\calf$ (with $\dim\calf>1$) is $P$-totally umbilical.
Note that for ${\mathcal D}=TM$, condition \eqref{E-P-Einst} is satisfied for Einstein manifolds, see \cite{lw2}.

The~following corollary of Theorem~\ref{T-main01p1} generalizes result in \cite[Section~4.2]{lw2}.

\begin{corollary}\label{C-P2}
Let $(M,{\mathcal D},g)$ be a closed sub-Riemannian manifold with
${\mathcal D}=T\calf\oplus\,{\rm span}(N)$, $P$-\textit{totally umbilical} foliation $\calf$, a harmonic orthogonal distribution ${\mathcal D}^\bot$,
and satisfying \eqref{E-P-Einst}.
Then $\sigma_{r}(\calf)$ depends on $r,n,C$ and the volume of $(M,g)$ only, i.e., the following integral formula holds:
\begin{equation}\label{E-cor2}
  \sigma_{r}(\calf) =
  \bigg\{\begin{array}{cc}
   (C/n)^{n/2}\Big(\begin{smallmatrix} n/2 \\ r/2 \end{smallmatrix}\Big) {\rm Vol}(M,g), & n,r\ {\rm even}, \\
   0 , & r\ {\rm odd}.
  \end{array}
\end{equation}
\end{corollary}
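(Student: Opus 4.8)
The plan is to follow the template of the proof of Corollary~\ref{C-P1}, but with the pointwise curvature condition \eqref{E-Pcurv-c} replaced by the two hypotheses of the statement: $P$-total umbilicity $A=(\sigma_1/n)\,\id_{\,T\calf}$ and the Einstein-type condition \eqref{E-P-Einst}. The aim is to collapse every term of the general formula \eqref{E-intNTNTp} to a scalar multiple of $\sigma_r$, thereby producing a two-step recursion for the total mean curvatures $\sigma_r(\calf)$, and then to integrate that recursion by induction exactly as for \eqref{E-cor1}.

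The first step is to trivialize the Newton transformations. Since $A$ is a multiple of $\id_{\,T\calf}$ and each $T_r(A)$ is a polynomial in $A$, every $T_r(A)$ is itself a scalar multiple of the identity; comparing traces through Lemma~\ref{L-NTprop}, which gives $\tr_\calf T_r(A)=(n-r)\,\sigma_r$, pins down the scalar, so that $T_r(A)=\frac{(n-r)\sigma_r}{n}\,\id_{\,T\calf}$ at each point. This is the main structural simplification and makes both curvature terms in \eqref{E-intNTNTp} directly computable.

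The second step is to evaluate those two terms. For the first, $\tr_\calf(T_r(A)\,{\mathcal R}^P_N)=\frac{(n-r)\sigma_r}{n}\,\tr_\calf{\mathcal R}^P_N=\frac{(n-r)\sigma_r}{n}\,\Ric^P_{N,N}$, and \eqref{E-P-Einst} with $X=N$ gives $\Ric^P_{N,N}=C$. For the sum, each summand is, up to the scalar carried by $T_{r-j}(A)$, a multiple of $\tr_\calf{\mathcal R}^P_{A^{j-1}Z}=\Ric^P_{A^{j-1}Z,N}$ with $Z=P\,\nabla_N N$. Since $\<Z,N\>=\<\nabla_N N,N\>=\frac12\,N\<N,N\>=0$, the vector $A^{j-1}Z$ lies in $T\calf$ and is $g$-orthogonal to $N$, so \eqref{E-P-Einst} forces $\Ric^P_{A^{j-1}Z,N}=C\,\<A^{j-1}Z,N\>=0$ and the whole sum vanishes. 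Substituting into \eqref{E-intNTNTp} leaves the recursion
\[
 (r+2)\,\sigma_{r+2}(\calf)=\tfrac{C(n-r)}{n}\,\sigma_{r}(\calf),
\]
which is precisely \eqref{E-cor1} with $c$ replaced by $C/n$.

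The final step reuses the induction from Corollary~\ref{C-P1}. Formula \eqref{E-int-sigma1} gives $\sigma_1(\calf)=0$, so the recursion propagates $\sigma_r(\calf)=0$ for every odd $r$; and for even $n$ and even $r$, iterating the recursion from the base value $\sigma_0(\calf)=\mathrm{Vol}(M,g)$ yields the closed form \eqref{E-cor2}. I expect no genuine obstacle: the only two points needing care are the scalar identity $T_r(A)=\frac{(n-r)\sigma_r}{n}\,\id_{\,T\calf}$, which depends on umbilicity, and the orthogonality $Z\perp N$, which is what annihilates the $Z$-sum. Once these are in place the computation is purely formal, and the parity bookkeeping is identical to that of Corollary~\ref{C-P1}.
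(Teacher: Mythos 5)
Your proposal is correct and follows essentially the same route as the paper: reduce $T_r(A)$ to the scalar form $\frac{n-r}{n}\,\sigma_r\,\id_{\,T\calf}$ via umbilicity and the trace identity of Lemma~\ref{L-NTprop}, use \eqref{E-P-Einst} to get $\Ric^P_{N,N}=C$ and $\Ric^P_{A^{j-1}Z,N}=0$ (the paper states the latter as $\Ric^P_{Z,N}=0$), and then run the same two-step recursion and parity induction as in Corollary~\ref{C-P1}. Your explicit justification that $Z\in T\calf$ (hence $A^{j-1}Z\perp N$) is a slightly fuller account of a step the paper leaves implicit, but the argument is the same.
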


\begin{proof}
In this case, $T_r(A)$ has the form
\[
 T_r(A) = a_r\,(\sigma_1/n)^r\id_{\,T\calf},\quad {\rm where}\quad a_r=\sum\nolimits_{\,0\le i\le r}(-1)^{r-i}
 \Big(\begin{matrix} n \\ i \end{matrix}\Big) .
\]
Since $\tr_{\,\calf} T_r(A)=(n-r)\sigma_r = (n-r)\big(\begin{smallmatrix} n \\ r \end{smallmatrix}\big)(\sigma_1/n)^r$, then
\[
 a_r = \frac{n-r}{n}\Big(\,\begin{matrix} n \\ r \end{matrix}\,\Big),\quad
 T_r(A) = \frac{n-r}{n}\,\sigma_r\id_{\,T\calf}.
\]
By our assumption \eqref{E-P-Einst},
\[
 \Ric^P_{Z,N}=0,\quad
 \Ric^P_{N,N} = C.
\]
Thus, for a closed manifold with a $P$-\textit{totally umbilical} foliation $\calf$ and condition \eqref{E-P-Einst}, formula \eqref{E-intNTNTp} becomes
\begin{equation*}
 \sigma_{r+2}(\calf) = \frac{C(n-r)}{n(r+2)}\,\sigma_{r}(\calf).
\end{equation*}
Using induction similarly to Corollary~\ref{C-P1}, we obtain \eqref{E-cor2}.
\end{proof}

\begin{remark}\rm
Our integral formulas provide more conditions for the mean curvature $H=\sigma_1/n$ of $\calf$.
In the case of a $P$-totally umbilical codimension-one foliation in ${\mathcal D}$ with $A=H\id_{\,T\calf}$, such conditions can be easily derived from \eqref{E-intNTNTp} using $\sigma_r = \big(\begin{smallmatrix} n \\ r \end{smallmatrix}\big) H^r$:
\begin{eqnarray}\label{E-intNTNTp-H}
\nonumber
 && \int_M H^{r-1}\Big({H}^{3}(n-1)(n-r-1)\,n \\
 && -{H}(n-1)(r+1)\Ric^P_{\,N,N} - r(r+1)\Ric^P_{\,Z,N}\Big)\,{\rm d}\vol_g = 0.
\end{eqnarray}
Here we used the following identity with binomial coefficients:
\[
 \sum\nolimits_{\,1\le j\le r}\,(-1)^{j-1}\,\frac{n-r+j}{n} \Big(\,\begin{matrix} n \\ r-j \end{matrix}\Big)
 =\Big(\,\begin{matrix} n -2 \\ r-1 \end{matrix}\Big).
\]
These integrals contain polynomials depending on $H$, and one can get obstructions for existence of $P$-totally umbilical foliations.
For example, if $r=n-1$, then \eqref{E-intNTNTp-H} reads as
$\int_M H^{n-2}(H\Ric^P_{\,N,N}+\Ric^P_{\,Z,N}) \,{\rm d}\vol_g=0$.
\end{remark}

\begin{example}\rm
Here is an amazing consequence of \eqref{E-int-sigma1} and \eqref{E-sigma2-new}.
Let a compact sub-Riemannian mani\-fold $(M, g; {\mathcal D})$ with a harmonic orthogonal distribution ${\mathcal D}^\bot$ and the condition
\[
 \Ric^P\ge 2\,c >0
\]
for some real $c$, be equipped with a codimension-one foliation $\calf$, i.e., ${\mathcal D}=T\calf\oplus\,{\rm span}(N)$.
Then \textit{the image of the function $\sigma_2: M \to \RR$ contains interval $[0, c+\eps]$ for some $\eps>0$}.
Indeed, by Reeb type formula \eqref{E-int-sigma1}, $\sigma_1(x)=0$ at some $x\in M$. Then $\sigma_2(x)=-\tau_2(x)\le 0$, see \eqref{E-sigma2-tau1-2}.
By \eqref{E-sigma2-new}, $\sigma_2(\calf)\ge c\,{\rm Vol}(M,g)$. Hence, there exists $y\in M$ such that $\sigma_2(y) > c$
(otherwise, $\sigma_2 \le c$ on $M$, therefore, $\sigma_2 \equiv c>0$ on $M$ -- a contradiction to $\sigma_2(x) \le 0$).
\end{example}

\section{Conclusion}

In the article we generalized integral formula \eqref{E-intNTp1} (and its consequences) for sub-Riemannian setting.
We delegate the following for further study.

1. Our integral formulas can be easily extended for foliations and distributions defined outside of~a~``singularity~set" $\Sigma$
(a finite union of pairwise disjoint closed submanifolds of codimension at least $k$ of a closed manifold $M$)
under additional assumption of convergence of certain integrals.
Then, instead of the Divergence theorem, we apply the following, see~\cite{lw2,pw1}:
if $(k-1)(q-1)\ge1$ and $X$ is a vector field on $M\setminus\Sigma$ such that $\int_M\|X\|^q\,{\rm d}\vol_g < \infty$, then
$\int_M \Div X\,{\rm d}\vol_g = 0$.

2. One can extend our integral formulas for holomorphic foliations of
complex sub-Rieman\-ni\-an manifolds, see \cite{sve} for the case of Riemannian manifolds, i.e., ${\mathcal D}=TM$.

3. Our integral formulas can be extended to foliations of arbitrary codimension of a sub-Riemannian manifold,
see \cite{r8} for the case of Riemannian manifolds, i.e., ${\mathcal D}=TM$.



\begin{thebibliography}{1}

\bibitem{arw}
K.~Andrzejewski, V.~Rovenski and P.~Walczak, {\em Integral formulas in foliations theory}, 73--82,
in {\it Geometry and Its Applications}, {Springer Proc. in Math. and Statistics}, {72}, {Springer},~2014.

\bibitem{aw2010}
K. Andrzejewski and P. Walczak, The Newton transformation and new integral formulae for foliated manifolds,
Ann. Global Anal. Geom. {37}:2 (2010), 103--111.

\bibitem{BF}
A. Bejancu  and H. Farran, \textit{Foliations and geometric structures}. Springer-Verlag, 2006.

\bibitem{blr}
 F.\,Brito, R.\,Langevin and H.\,Rosenberg,
 Int\'{e}grales de courbure sur des vari\'{e}t\'{e}s feuillet\'{e}es, J. Diff. Geom., 1981, 16, 19--50.

\bibitem{g1967}
A. Gray, {Pseudo-Riemannian almost product manifolds and submersions}, J. Math. Mech., {16}, no. 7 (1967) 715--737.

\bibitem{lw2}
M. Lu\.{z}y\'{n}czyk and P. Walczak, {New integral formulae for two complementary orthogonal distributions on Riemannian manifolds},
{Ann. Glob. Anal. Geom.} {48} (2015), 195--209.

\bibitem{nora}
T. Nora, Seconde forme fondamentale d'une application et d'un feuilletage. {Th{\'e}se, l'Univ. de Limoges} (1983), 115 pp.

\bibitem{reeb1}
G. Reeb, {Sur la courboure moyenne des vari\'et\'es int\'egrales d'une \'equation de Pfaff $\omega = 0$},
C. R. Acad. Sci. Paris {231} (1950), 101--102.


\bibitem{r8}
V. Rovenski, {Integral formulae for a Riemannian manifold with two orthogonal distributions},
Central European J. Math. {9}, No.~3 (2011), 558--577.

\bibitem{RWa-1}
V. Rovenski and P. Walczak, {\em Topics in Extrinsic Geometry of Codimension-One Foliations},
Springer Briefs in Mathematics, Springer-Verlag, 2011.

\bibitem{su2}
D. Sullivan, {A homological characterization of foliations consisting of minimal surfaces}, Comm. Math. Helv. {54} (1979), 218--223.

\bibitem{sve}
M. Svensson, {Holomorphic foliations, harmonic morphisms and the Walczak formula}, J. Lond. Math. Soc., 2003, 68, 781--794.

\bibitem{pw1}
P. Walczak, {Integral formulae for foliations with singularities}. Coll. Math. {150}(1) (2017), 141--148.

\end{thebibliography}
\end{document}